\documentclass[12 pt]{article}
\usepackage{graphicx} 
\usepackage{times}\usepackage{setspace}
\usepackage{graphicx}
\usepackage{float}
\usepackage[caption = false]{subfig}
\usepackage{hyperref}
\usepackage{natbib}
\usepackage{amssymb}
\usepackage{amsthm}
\newtheorem{dfn}{Definition}

\usepackage[english]{babel}

\newtheorem{theorem}{Theorem}
\newtheorem{lemma}[theorem]{Lemma}
\usepackage{enumerate}
\usepackage{nccmath}
\usepackage{setspace}
\usepackage{amsmath}
\usepackage{blindtext, multicol}
\usepackage{multicol}
\usepackage{mathrsfs}
\usepackage[autostyle, english = american]{csquotes}
\MakeOuterQuote{"}
\newtheorem{Lemma}{LEMMA}[section]
\newtheorem{Proposition}[Lemma]{Proposition}
\def\bp{\begin{Proposition}}
\def\ep{\end{Proposition}}
\newtheorem{prop}{Proposition}
\newtheorem{cor}[theorem]{Corollary}
\allowdisplaybreaks
\makeatletter
\renewcommand*{\@fnsymbol}[1]{\@arabic{#1}}
\makeatother

	\newcommand{\n}[1]{\lVert#1\rVert} 
	\newcommand{\la}[1]{\langle#1\rangle} 
	\newcommand{\p}[1]{{#1}_{\rho}} 
	\providecommand{\keywords}[1]
	{
\textbf{Keywords: } #1
}

\title{\Large Convergence Analysis of Statistical Inverse Problems on Reproducing Kernel Banach Spaces  }
\author{\small Darrel K Joseph\thanks{The author was supported through the Junior Research Fellowship by the University Grants Commission (UGC),
	India-NTA Ref. no.: 221610060111.\\Indian Institute of Science Education and Research, Thiruvananthapuram, India. \texttt{darreljoseph23@iisertvm.ac.in}}
$~$ and $~$ M P Rajan\thanks{Indian Institute of Science Education and Research, Thiruvananthapuram, India. \texttt{rajanmp@iisertvm.ac.in}.
	The author acknowledges the support received from the SERB-CRG grant no. CRG/2023/004903.}}
	\date{}
	\begin{document}

\maketitle
\noindent\textbf{Abstract}
Statistical inverse problems have garnered significant attention
in recent years due to the growing importance of statistical
learning theory and functional analytic approaches in the fields
of machine learning and artificial intelligence. In this paper, we
investigate the stable approximation of the element $u^{\dagger}$
that satisfies the equation $Au = g$, where $A$ is a linear
operator that maps a Banach space into an appropriate function space. The function $g$ is observed only
through independently and identically distributed data points that
are corrupted by noise and assumed to follow an unknown
distribution $\rho$. We employ the Tikhonov regularization scheme,
leveraging statistical learning techniques and the framework of
reproducing kernel Banach spaces to estimate the solution. We
establish convergence and derive the convergence rate of the
estimated solution with respect to the true solution as the number
of data points increases, with the rate expressed in probabilistic
terms. The theoretical findings are further supported by numerical
experiments that demonstrate the effectiveness of the proposed
approach.


\noindent \keywords{Reproducing kernel Banach space $\cdot$ Inverse problem $\cdot$ Statistical learning $\cdot$ Convex analysis $\cdot$ Regularization}

\section{Introduction}

In the classical inverse problem literature, we deal with equations of the form
\[A(x)=y,\]
where $A$ is a bounded linear operator between appropriate vector spaces. We are interested in approximating the solution $x$ for some given $y$.
If $A$ is ill-posed, that is if $A$ has no continuous inverse, then a small perturbation in the value of $y$ can cause a large error for the output $x$,
hence different regularization techniques are employed to find a stable approximation.

\noindent For deterministic cases, we often have to deal with $y^{\delta}$ instead of $y$ such that
\[\n{y-y^{\delta}}\leq\delta,\qquad \delta>0\]
Then, we solve for the equation modeled as follows
\[A(x)=y^{\delta}.\]
If  $x^{\dagger}$ is the true solution of $y$, i.e., $A(x^{\dagger})=y$, then we seek out a stable approximation $x_{\lambda}^{\delta}$ of $x^{\dagger}$ such that \[\n{x_{\lambda}^{\delta}-x^{\dagger}}\to0\qquad\text{as}\qquad \delta,\lambda\to0.\]
Here, $\lambda$ is the regularization parameter and $x_{\lambda}^{\delta}$ is the regularized solution corresponding to $y^{\delta}$.
This has been well studied during the late twentieth and early twenty-first century, and
the prominent work can be seen in
\cite{engl2000regularization,nair2009linear,schuster2012regularization} and in many works of the author (cf \cite{rajan1, rajan2, rajan3,rajan4,rajan5,rajan6}).\\

\noindent
Inverse problems in the statistical learning setting have
attracted growing interest in recent years. Unlike the
deterministic case, the error here is modeled as a random
variable following a certain probability distribution.
Accordingly, the convergence rate and the error between the true
and regularized solutions are expressed in terms of expectations
or probabilities. This formulation aligns closely with the
framework of statistical learning. The advantage of statistical
inverse problems over classical ones lies primarily in the ability
of statistical and Bayesian approaches to address ill-posedness
and uncertainty more effectively. A key strength of statistical
inverse problems is that they provide probabilistic solutions that
quantify uncertainty, incorporate prior information, and yield a
distribution of possible outcomes rather than a single
deterministic solution. In contrast, classical inverse problems
often face challenges such as non-uniqueness, sensitivity to
noise, and the absence of a formal mechanism for uncertainty
quantification. This probabilistic and principled framework for
integrating data and prior knowledge makes statistical inverse
problems particularly powerful in modern, data-driven applications
where noise and model uncertainty are significant concerns.
Several recent works have addressed these challenges (see
\cite{BGM, BAUER200752, Liu2017, Learningrate}).\\

In statistical inverse problem, we consider a linear operator $A$ from a normed space $B$ to a linear space $V$ consisting of real valued functions on $X$. For $M>0$, given a set of $m$ data points of the form $(x_i,y_i)\in (X\times[-M,M])$ that follows some probability measure $\rho(x,y)$, we aim to approximate a minimizer $u_{\rho}\in B_1$ of the risk functional
$$\int_{X\times[-M,M]}|y-(Au)(x)|^pd\rho,~~p>1$$ 
over all $u\in B$. In \cite{BGM}, such an inverse learning problem is studied for $p=2$ when $B$ is a Hilbert space, for which an optimal rate is obtained. Whereas in \cite{Liu2017, Learningrate}, the study is carried out for $p=2$ in a Banach space setting for the special case $A=I$, which is also called the direct problem. The convergence and the convergence rate is given in terms of $L^2$ norm. We extend this framework to the inverse problem setting in Banach spaces by introducing a more general bounded operator $A$, thereby providing a unified approach to both the direct and inverse problems. We consider an empirical $p$-th power loss instead of squared loss. The convergence rates are then characterized in terms of the norm of the underlying space. \\

In statistical learning, the notion of convergence is based on the norm difference between the reconstructed solution $\hat{u}$ and $u_{\rho}$ as $m\to\infty$. A regularization technique analogous to Tikhonov regularization
\cite{Tikhonov:1963} is employed. The method relies on the concept of Reproducing Kernel Banach Space (RKBS), first introduced in \cite{5179093}. From a mathematical point of view, the learning problem involves minimizing a functional with an $L^p$- type loss $(p>1)$ along with a norm-based regularization in a Banach space. This setting does not rely on an inner product structure, so standard Hilbert space tools are not directly applicable. The RKBS framework provides a suitable setting where point evaluations are still continuous, ensuring that the risk functional is well-defined and the optimization problem is meaningful. The RKBS structure also allows us to write the function values in terms of the elements of the dual space, using a Kernel function. The main advantage is that the subdifferentials can then be expressed using the given Kernel function which eventually aids us to approximate the solution. A more generalized version of RKBS is given in \cite{Xu2023SparseBanach}, where the reflexivity of the underlying space is not required. Such a general framework allows us to consider  non-reflexice spaces as well. However, in our paper we consider a reflexive space and hence the original definition of RKBS is adopted. On the other hand, \cite{JMLR:v22:20-751} and \cite{MR4749129} have used the more recent definition of RKBS to establish representer theorem, and solutions for various inverse problem independent of the reflexivity of the space. The key difference between our paper and theirs is that they have considered a functional analytic approach, whereas we use statistical approach to determine a high probability convergence.\\

\noindent The paper is organized as follows: Section \ref{sec2}
introduces the problem setting, basic definitions and necessary
assumptions. Section \ref{sec3}
presents the main theoretical results and corollary. In Section \ref{proofs} the proofs establishing
convergence and convergence rates are given. Finally, Section \ref{sec4}
provides a numerical example to demonstrate and validate the
theoretical findings.

\section{Problem setting}
\label{sec2}
\noindent In this paper, we consider an injective bounded linear operator
$A$ from a real Banach space $(B_1,\parallel\cdot\parallel_{B_1})$ to
a vector space $V$ of real-valued functions over some compact metric space $X$, i.e
\[A:B_1\to V\subseteq \mathcal{F}(X,\mathbb{R}).\]
Then our problem will be modeled as
\[\qquad Au=g,\qquad u\in B_1~\text{and}~ g\in V,\] where
$g$ is observed only through independently and identically
distributed noisy data points $z=\{z_i\}_{i=1}^m=\{(x_i,y_i)\}_{i=1}^m\in Z^m=(X,[-M,M])^m, M>0$
that are assumed to follow an unknown Borel probability distribution
$\rho(x,y).$
The distribution will be considered to be of the form $\rho(x,y)=\rho(y|x)\rho_X(x)$,
where $\rho(y|x)$ is the conditional distribution of $y$ for some given $x$ and $\rho_X(x)$ is the marginal distribution.
\vspace{0.2cm}

\noindent For some $p>1$, we assume that there exists a minimizer $u_{\rho}\in B_1$ corresponding to the measure $\rho(x,y)$ on $Z=(X\times [-M,M])$ such that
\begin{align}\label{true}
	u_{\rho}:&=\arg \min _{u\in B_1} \int_{Z}|y-(Au)(x)|^{p}~ d \rho\\
	&=\arg \min _{u\in B_1}\mathcal{E_{\rho}}(u)\notag,
\end{align}where
\begin{equation}\label{err}
	\mathcal{E_{\rho}}(u):= \int_{Z}|y-(Au)(x)|^{p}~ d \rho.
\end{equation}
We will show that in our settings the image of $A$ will be contained in $C(X)$ and therefore \eqref{true} is integrable for all $p$.
\vspace{0.3cm}

\noindent The regularized solution corresponding to the data points $z$ is given by $u_z^{\lambda}\in B_1$, such that

\begin{equation}\label{reg}
	u_z^{\lambda}=\text{arg}\underset{u\in B_1}{\text{min}}\left(\frac{1}{m}\sum_{i=1}^m|y_i-(Au)(x_i)|^p+\frac{\lambda}{q} \n u ^q_{B_1}\right)
\end{equation}
where $q>1$ and $\lambda$ is the regularization parameter. Note that we only used the vector space structure of the co-domain.

\noindent We go on to further define the $\rho$ induced regularized solution  corresponding to $\lambda$ and \eqref{true} by,
\begin{equation}\label{regtrue}
	u^{\lambda}_{\rho}:= \arg\min_{u\in B_1}\left(\mathcal{E}_{\rho}(u)+\frac{\lambda}{q}\n u_{B_1}^q\right).
\end{equation}
The minimizers \eqref{reg} and \eqref{regtrue} exist uniquely from the fact that the expression in the parentheses is a strictly convex, continuous, and coercive function (\cite{MR463994}, proposition
$1.2$).\\
The objective is to establish the convergence of $u_z^{\lambda}$
to $u_{\rho}$ and derive the convergence rate.
\subsection{Definitions and basic results}
In this subsection, we give an overview of some of the basic  definitions that we used for analysis purposes from convex analysis and related literature.
\begin{dfn} Subdifferential:
	Let $F:B\rightarrow\mathbb{R}$ be a convex function defined on a
	Banach space $B$. The subdifferential $\partial F$ of $F$ at the
	point $u\in B$ is characterized as the set
	\cite{rockafellar1970convex}
	\begin{align}\label{subd}
		(\partial F)(u)&=\{ u^* \in {B}^{*}: F\left(v\right)-F(u)
		\left.\geq\left\langle v-u,u^* \right\rangle_{B\times B^*}, \forall v \in {B}\right\}\\
		& \neq \phi .\notag
	\end{align}
\end{dfn}
%
\noindent Here, $\langle \cdot,\cdot\rangle_{B\times B^*}$ is the
duality product. If $u^* \in$ $(\partial F)(u)$, then we call
$u^*$ a subgradient of $F$ at $u$.
We note that $0\in(\partial F)(u_0)$ if and only if $u_0$ is a point of minimum \cite{Clarke1998}.
Further, whenever the function $F$ has the form $F(u)=\frac{1}{q}\|u\|_{{B}}^{q},q>1$, we will denote the subdifferentials of $F$ at $u\in B$ as $J_{q}(u)$.
\begin{dfn} $q$-Uniform Convex Space:
	The Banach space ${B}$ is called a $q$-uniform convex space if there exists a constant $c_{q}>0$ for which
	\begin{equation}\label{eq6}
		\begin{array}{r}
			\frac{1}{q}\left(\|u\|_{{B}}^{q}-\|v\|_{{B}}^{q}\right) \geq\left\langle u-v,j_q(v)\right\rangle_{B^*\times B}+c_{q}\|u-v\|_{{B}}^{q}
		\end{array}
	\end{equation}
	for all $u$, $v \in {B}$ and all $j_{q}(v) \in J_{q}(v)$ \cite{Xu1991Inequalities}.
\end{dfn}
\noindent    The parallelogram law ensures that Hilbert spaces are
$2$-uniform convex. The sequence space $l^p$, the equivalence
classes of measurable function space $L^p$, the Sobolev space
$W^{k,p}$, etc, are $p$-uniform convex whenever $p\geq2$ and $2$
-uniform convex when $1<p<2$ \cite{Hanner1956,XuRoach1991}.
\vspace{0.2cm}

\begin{dfn} Reproducing Kernel Banach Space\cite{5179093}: \label{rkbs}
	A reflexive Banach space $B$ of functions on $X$ is said to be a reproducing kernel Banach space (RKBS)
	if there is another Banach space $B^{\#}$ of functions on $X$ that is isometrically isomorphic to $B^*$ (the continuous dual of $B$)
	such that the point evaluations are continuous on $B$ and $B^{\#}$.
\end{dfn}

\begin{dfn}\label{def1}
	Let $B_K$ be a Banach space defined by taking the elements of the range space, that
	is,
	\begin{equation}
		\label{B_k}
		B_K :=\{g\in V~|~\exists u\in B_1 ~\text{such that}~ Au=g  \}
	\end{equation}
	with the associated norm
	\begin{equation*}
		\n g _{B_K}:={\n u _{B_1}~~\text{where}~ u\in B_1~\text{such that}~
			Au=g}.
	\end{equation*}
	
\end{dfn}
\noindent We will show later that $B_K$ is an RKBS.

\subsection{Assumptions}
We now make the following assumptions:
\begin{enumerate}
	\item[(A1)] There exists a Banach space $B^{\#}$ of functions from $X$ to $\mathbb{R}$ which is isometrically isomorphic to $B_1^*$ such that the pointwise evaluation linear maps are continuous on $B^{\#}$.
	\item[(A2)] The evaluation functionals with respect to the sample point $x\in X$
	\[{S_x:B_1\rightarrow\mathbb{R}}\]
	\begin{equation}\label{sam}
		S_x(u)=(Au)(x)
	\end{equation}
	are uniformly continuous, that is, there exists a constant $0<k<\infty$ such that\\
	\begin{equation}\label{bound}
		{|S_x(u)|\leq k\n u_{B_1}}\quad\forall x\in X.
	\end{equation}
\end{enumerate}
\begin{enumerate}
	\item[(A3)] We assume that the approximation error satisfies
	\begin{equation}\label{eq9}
		\|u^{\lambda}_{\rho}-u_{\rho}\|_{B_1}\leq c_{\beta}\lambda^{\beta}
	\end{equation}
	where $\beta$ and $c_{\beta}$ are positive constants.
\end{enumerate}
We note that the bounds given by the third assumption are common in inverse problem literature (cf.  Proposition $5.8$ in
\cite{BGM}, and \cite{Hofmann2007}). Such a bound can also be derived from appropriate source conditions as well (cf. Chapter $4.4$ \cite{nair2009linear}). Now we claim that $B_K$ is an
RKBS under assumption $1$ and $2$.

\begin{prop} Let $B_1$ be a $q$-uniform convex Banach space. Then the Banach space $B_K$ defined in (\ref{B_k}) is an
	RKBS.
\end{prop}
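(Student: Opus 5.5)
Since $A$ is injective, the map $A:B_1\to B_K$ is a linear bijection, and by Definition \ref{def1} we have $\n{Au}_{B_K}=\n u_{B_1}$. So $A$ is an isometric isomorphism, which makes $B_K$ a Banach space of functions on $X$. The plan is to check the three requirements of Definition \ref{rkbs} in turn: reflexivity of $B_K$, continuity of point evaluations on $B_K$, and the existence of a function space $B_K^{\#}$ isometric to $B_K^*$ on which point evaluations are continuous.

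\emph{Reflexivity.} By the Milman--Pettis theorem, a uniformly convex Banach space is reflexive. Setting $v=0$ in \eqref{eq6} gives $\frac1q\n u^q\ge c_q\n u^q$. More usefully, the inequality \eqref{eq6} is the standard characterization of $q$-uniform convexity \cite{Xu1991Inequalities}, and it implies that the modulus of convexity satisfies $\delta(\epsilon)\gtrsim\epsilon^q$. Hence $B_1$ is uniformly convex and therefore reflexive. Reflexivity is preserved under isometric isomorphism, so $B_K$ is reflexive.

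I expect this step to be the main obstacle. Deriving uniform convexity of the norm from \eqref{eq6} requires either citing Xu's equivalence theorem or carrying out a short argument: apply \eqref{eq6} at the midpoint $v=(u+w)/2$ with $u,w$ on the unit sphere, then add the two resulting inequalities. The cleanest route is to cite Xu's equivalence and then Milman--Pettis.

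\emph{Point evaluations on $B_K$.} For $g=Au\in B_K$, assumption (A2) gives
$$|g(x)|=|S_x(u)|\le k\n u_{B_1}=k\n g_{B_K}.$$
So the evaluations are continuous, in fact uniformly in $x$.

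\emph{The dual space $B_K^{\#}$.} Let $T:B^{\#}\to B_1^*$ be the isometric isomorphism from (A1). Since $A$ is an isometric isomorphism, its adjoint $A^*:B_K^*\to B_1^*$ is also an isometric isomorphism. Take $B_K^{\#}:=B^{\#}$ and define the map
$$\Phi:=(A^*)^{-1}\circ T:B^{\#}\to B_K^*.$$
This is an isometric isomorphism. Concretely, $\Phi(f)(Au)=\la{u,Tf}$ for $f\in B^{\#}$. By (A1), point evaluations on $B^{\#}$ are continuous.

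Combining the three steps, $B_K$ is a reflexive Banach space of functions on $X$ with continuous point evaluations, and its dual is isometrically isomorphic to the function space $B^{\#}$, on which point evaluations are also continuous. Hence $B_K$ is an RKBS.
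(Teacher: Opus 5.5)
Your proof is correct and follows the paper's route. You show that $A$ is an isometric isomorphism onto $B_K$, use (A2) for continuity of point evaluations on $B_K$, compose (A1) with the adjoint $A^*$ to get the function-space dual, and pass from $q$-uniform convexity to uniform convexity and then to reflexivity. Your midpoint argument (or the citation of Xu) and the explicit isometry $(A^*)^{-1}\circ T$ simply fill in details that the paper asserts without proof.
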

\begin{proof}  We assert that $B_K$ is well defined as $A$ is injective. Further, $A$
	is an isometric isomorphism between $B_1$ and $B_K$ as it is a
	bijective linear map that preserves norm. Hence, it follows that
	$B_1$ and $B_K$ are isometrically isomorphic. Similarly, we can
	see that $B_1^*$ and $B_K^*$ are isometrically isomorphic. Let
	$g^*\in B_K^*$, then the adjoint map defined as
	\begin{align}
		A^*&:B_K^*\to B_1^*\\\notag
		&A^*(g^*)=g^*(A)
	\end{align}
	is a bijective isometry. Hence, by assumption $(A1)$ , $B_K^*$ and $B^{\#}$ are isometrically isomorphic.
	
	We note that  $$|g(x)|=|(Au)(x)|=|S_x(u)|\leq k\n u_{B_1}=k\n g_{B_K}.$$
	This implies that the point evaluations are continuous on $B_K$, whose continuous dual is isometrically isomorphic to $B^{\#}$. Further, since $B_1$ is $q$-uniform convex (hence uniformly convex), it is a reflexive Banach space which makes $B_K$ to be reflexive.
	Then by definition \ref{rkbs}, $B_K$ is an RKBS on $X$. \vspace{0.3cm}
\end{proof}
\noindent {\bf Remark:}    From \cite{5179093}, one can find a
reproducing kernel $K:X\times X\rightarrow \mathbb{R}$ that
satisfies the following conditions
\begin{align}
	g(x)&=\langle g, K(\cdot,x)\rangle_{B_K\times B_K^*}\quad\forall g\in B_K~\text{and}~ \forall ~x\in X\\
	g^*(x)&=\la{K(x,\cdot),g^*}_{B_K\times B_K^*}\quad \forall g^*\in B_K^*~\text{and}~ \forall ~x\in X\notag
\end{align}
By the isometry of $B_1$ and $B_K$, the above equations give
\begin{align}\label{rep}
	g(x)&=\langle u, H(\cdot,x)\rangle_{B_1\times B_1^*}\quad u\in B_1,H(\cdot,x)\in B_1^*\\
	g^*(x)&=\langle H(x,\cdot),u^*\rangle_{B_1\times B_1^*}\quad H(x,\cdot)\in B_1,u^*\in
	B_1^*\notag,
\end{align}
where we had $Au=g$ and $H(\cdot,x)=A^*(K(\cdot,x))$. A similar argument tells us $A(H(x,\cdot))=K(x,\cdot)$ and $A^*g^*=u^*$.

\vspace{0.2cm}
\noindent   It can be easily verified from the previous statement that
\begin{equation}\label{kxy}
	K(x,y)=\langle K(x,\cdot),K(\cdot,y)\rangle_{B_K\times B_K^*}=\langle H(x,\cdot),H(\cdot,y)\rangle_{B_1\times B_1^*}
\end{equation}

\noindent    From the properties mentioned above, the reproducing
property for any function $g\in B_K$ can be stated as
\begin{equation}\label{repty}
	g(x)=\langle u,H(\cdot,x)\rangle_{B_1\times B_1^*}\quad\forall x\in X
\end{equation}
where $u$ being the only element such that $Au=g$.\\

\noindent\textbf{Example:} Before moving into further definitions and concepts, an example of an RKBS $B_K$ and an operator $A$ is provided. (See \cite{5179093}, equation $(19)$).\\

Let $X=\mathbb{R},I=[-\frac{1}{2},\frac{1}{2}]$ and $B_1=L^p(I)$, $1<p<2$. Let the operator $$A:B_1\to C
(\mathbb{R})$$ be given as
\begin{equation*}
	(Af)(x)=\int_If(t)e^{i2\pi xt}dt
\end{equation*} 
Then the set $B_K$ defined as
\begin{equation*}
	B_K=\{g\in C(\mathbb{R})~|~g=Af~\text{for some}~f\in L^p(I)\}
\end{equation*}
with associated norm $\|g\|_{B_K}=\|f\|_{L^p}$ is an RKBS on $\mathbb{R}$.
The dual space will be of the form
\begin{equation*}
	B_K^*=\{\phi\in C(\mathbb{R})~|~\phi(x)=\int_Ie^{-i2\pi xt}\psi(t)dt~\text{for some}~\psi\in L^q(I)\}
\end{equation*}
with norm $\|\phi\|_{B_K^*}=\|\psi\|_{L^q}$ and the duality product
\begin{equation*}
	\langle g,\phi\rangle_{B_K\times B_k^*}=\langle f,\psi\rangle_{L^p\times L^q}
\end{equation*}
The kernel function is then given by
\begin{equation*}
	K(x,y)=\frac{\sin\pi(x-y)}{\pi(x-y)}
\end{equation*}
From \cite{5179093}, it can be verified that $A$ is injective (as $L^p(I)\subset L^1(I)$) and $$g(x)=\langle g,K(\cdot,x)\rangle_{B_K\times B_k^*}=\langle f,H(\cdot,x)\rangle_{B_1\times B_1^*},$$ where $H(\cdot,x)$ is defined as $A^*K(\cdot,x)$. 
Further, we have
$$|S_x^*(f)|=|(Af)(x)|\leq \int_I|f(t)e^{i2\pi xt}|dt\leq\int_I|f(t)|\cdot1dt\leq\|f\|_{L^p}$$
where the last inequality follows from Holder's inequality. Thus the point evaluation functionals are uniformly continuous. In addition $B_1^*\cong L^q(I)$ is isometric to $B_K^*$, a function space on $\mathbb{R}$. Hence, $B$ is an RKBS on $X$. \\

\noindent We now continue with the remaining definitions.
\begin{dfn} Covering Number\cite{dudley1967sizes}:
	The covering number $\mathcal{N}(X,r)$ of a metric space $X$ is the minimum number of open balls of radius $r$ required to cover $X$.
\end{dfn}

\begin{dfn} Logarithmic Complexity:
	A compact subset $E$ of a Banach space $\left({B},\|\cdot\|_{{B}}\right)$ is said to have logarithmic complexity exponent $s \geq 0$ if there exists a $c_{s}>0$ for which ${B}_E^R=\left\{f \in E:\|f\|_{{B}} \leq R\right\}$ satisfies
\end{dfn}
\begin{equation}\label{cov}
	\mathcal{N}\left({B}_E^R, r\right) \leq e^{c_{s}\left(\frac{R}{r}\right)^{s}}, \quad \forall r>0
\end{equation}
If we choose $r=\frac{1}{n},n\in\mathbb{N}$, then we have
\begin{equation*}
	\mathcal{N}\left({B}_E^R,\frac{1}{n} \right) \leq
	e^{c_{s}\left(Rn\right)^{s}}.
\end{equation*}

\noindent {\bf Notation:} Let
\begin{equation}
	\label{eqnW}
	W_{\rho}^{\lambda}(x,y)=|y-(Au_{\rho}^{\lambda})(x)|^{p-1}\text{sgn}[y-(Au_{\rho}^{\lambda})(x)].
\end{equation}
\noindent Then, for $h_u(x,y):=W_{\rho}^{\lambda}(x,y)(Au)(x)$, we define
the set $\mathcal{G}_{\rho}^{\lambda}$ as

\begin{equation}\label{G}
	\mathcal{G}_{\rho}^{\lambda}=\left\{h_u:X\times[-M,M]\to\mathbb{R}~|~ u \in {B_1},\|u\|_{{B_1}} \leq
	1\right\}.
\end{equation}
We will show that this is a compact subset of $C(X\times[-M,M])$ and hence has a finite covering number (which will be used later).
\noindent Further, we say that $H(\cdot,x)\in B_1^*$ is a uniformly continuous function with respect to $\|\cdot\|_{B_1^*}$ if $\forall\epsilon>0$, $\exists\delta>0$ depending on $\epsilon$ such that
\[\|H(\cdot,x)-H(\cdot,y)\|_{B_1^*}<\epsilon\]
whenever $d(x,y)<\delta$.\vspace{0.3cm}


\section{Main Results}
\label{sec3}
In this section, we present the major results associated with
the convergence and convergence rate of the proposed scheme. We will state the theorems and the associated corollary.

\begin{theorem}\label{thm1}
	Let $(B_1,\|\cdot\|_{B_1})$ be a $q$-uniform convex Banach space and $V$ be a real vector space of functions
	that maps a compact metric space $X$ to $\mathbb{R}$. Consider $A:B_1\to V$ to be an injective linear operator
	such that the assumptions $(A1)$, $(A2)$, and $(A3)$ are true. If $H(\cdot,x)$ is a uniformly continuous function with respect to $\|\cdot\|_{B_1^*}$ and $\mathcal{G}_{\rho}^{\lambda}$ has logarithmic complexity $s$, then for any $\delta\in(0,1)$, there holds with confidence $1-\delta$ that
	\begin{equation}\label{eq22}
		\left\|u_{z}^{\lambda}-u_{\rho}\right\|_{B_1} \leq  \sqrt[q-1]{\frac{4p\Omega}{\lambda c_q}}~\max\left[\left(\frac{ \log (2 / \delta)}{m}\right)^{\frac{1}{2(q-1)}},\left(\frac{c_{s}}{m}
		\right)^{\frac{1}{(2+s)(q-1)}}\right]+c_{\beta}{\lambda}^{\beta},
	\end{equation}
	where
	\begin{equation}\label{omega}
		\Omega=\Omega(\lambda):=(M+k\|u_{\rho}\|_{B_1}+kc_{\beta}\lambda^{\beta})^{p-1}k.
	\end{equation}
\end{theorem}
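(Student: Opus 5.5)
We split the error by the triangle inequality,
\[
\|u_z^{\lambda}-u_{\rho}\|_{B_1}\le \|u_z^{\lambda}-u_{\rho}^{\lambda}\|_{B_1}+\|u_{\rho}^{\lambda}-u_{\rho}\|_{B_1}.
\]
Assumption (A3) bounds the second term by $c_\beta\lambda^\beta$, so the work is the sample error $\|u_z^{\lambda}-u_{\rho}^{\lambda}\|_{B_1}$.

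\textbf{Step 1: optimality conditions.} Using \eqref{repty} and the chain rule for subdifferentials, the Fréchet derivative of $\mathcal{E}_\rho$ at $u$ is
\[
-p\int_Z |y-(Au)(x)|^{p-1}\,\text{sgn}[y-(Au)(x)]\,H(\cdot,x)\,d\rho .
\]
So the minimality of $u_\rho^\lambda$ in \eqref{regtrue} gives some $j_q(u_\rho^\lambda)\in J_q(u_\rho^\lambda)$ with
\[
\lambda j_q(u^\lambda_\rho)=p\int_Z W_\rho^\lambda(x,y)H(\cdot,x)\,d\rho .
\]
The empirical functional \eqref{reg} is convex, so its subgradient inequality holds at $u_z^\lambda$. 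Next we use the $q$-uniform convexity inequality \eqref{eq6} with $v=u_\rho^\lambda$, $u=u_z^\lambda$, and the convexity of $t\mapsto|y-t|^p$, which gives
\[
|y_i-(Au_z^\lambda)(x_i)|^p-|y_i-(Au_\rho^\lambda)(x_i)|^p\ge -pW_\rho^\lambda(x_i,y_i)\,A(u_z^\lambda-u_\rho^\lambda)(x_i).
\]
Adding these to the fact that $u_z^\lambda$ minimizes \eqref{reg}, we aim for an inequality of the form
\[
\lambda c_q\|u_z^\lambda-u_\rho^\lambda\|^q\le p\left[\int_Z W_\rho^\lambda\, A(w)\,d\rho-\frac1m\sum_i W_\rho^\lambda(z_i)\,A(w)(x_i)\right],\qquad w=u_z^\lambda-u_\rho^\lambda .
\]
Writing $w=\|w\|\,\hat w$ with $\|\hat w\|\le1$, the bracket becomes $\|w\|$ times a deviation $\int h_{\hat w}\,d\rho-\frac1m\sum_i h_{\hat w}(z_i)$ with $h_{\hat w}\in\mathcal{G}_\rho^\lambda$. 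Dividing by $\|w\|$ yields
\[
\|w\|^{q-1}\le \frac{p}{\lambda c_q}\,\sup_{h\in\mathcal{G}_\rho^\lambda}\Big|\int h\,d\rho-\frac1m\sum_i h(z_i)\Big| .
\]
Careful constant tracking may change the factor 4 in \eqref{eq22}, for example if symmetric uniform convexity is used.

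\textbf{Step 2: uniform bound and compactness.} By (A2) and (A3), $|(Au_\rho^\lambda)(x)|\le k(\|u_\rho\|+c_\beta\lambda^\beta)$. Hence $|W_\rho^\lambda|\le (M+k\|u_\rho\|+kc_\beta\lambda^\beta)^{p-1}$, and every $h\in\mathcal{G}_\rho^\lambda$ satisfies $\|h\|_\infty\le\Omega$. The uniform continuity of $H(\cdot,x)$ in $B_1^*$ makes $Au$ continuous in $x$, equicontinuous over the unit ball, so $\mathcal{G}_\rho^\lambda\subset C(Z)$. This justifies the covering number $\mathcal{N}(\mathcal{G}_\rho^\lambda,r)\le e^{c_s(1/r)^s}$.

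\textbf{Step 3: concentration (the main obstacle).} We need a uniform deviation bound. Cover $\mathcal{G}_\rho^\lambda$ by $N=\mathcal{N}(\mathcal{G},\varepsilon/4)$ balls in sup norm. Hoeffding's inequality for each center gives
\[
\Pr\{\sup_h|\cdot|>\varepsilon\}\le 2N\exp\!\left(-\frac{m\varepsilon^2}{8\Omega^2}\right).
\]
Then we choose $\varepsilon=4\Omega\max\{(\log(2/\delta)/m)^{1/2},(c_s/m)^{1/(2+s)}\}$ and verify that $c_s(4/\varepsilon)^s\le m\varepsilon^2/(16\Omega^2)$ and $2e^{-m\varepsilon^2/(16\Omega^2)}\le\delta$. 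This balancing is where the exponent $1/(2+s)$ arises, and the scaling of the covering exponent with $\Omega$ may need normalizing (apply the complexity to $\mathcal{G}/\Omega$ or absorb constants). Finally, substituting into Step 1 and taking the $(q-1)$-th root, using $\max(a,b)^{1/(q-1)}=\max(a^{1/(q-1)},b^{1/(q-1)})$, gives \eqref{eq22} with confidence $1-\delta$.
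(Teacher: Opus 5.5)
Your proof is correct and is essentially the paper's argument. It uses the triangle inequality with (A3). It uses the inequality $\|u_z^\lambda-u_\rho^\lambda\|_{B_1}^{q-1}\le\frac{p}{\lambda c_q}\sup_{h\in\mathcal{G}_\rho^\lambda}|L_z(h)|$, obtained from the optimality condition for $u_\rho^\lambda$ plus $q$-uniform convexity; this is Lemmas~\ref{lemma4}, \ref{lemma5}, \ref{lemma7} and \eqref{eq42}, with $\gamma$ the empirical measure. It uses the covering-number/Hoeffding bound with $L=2$, $C=\Omega$ (Lemma~\ref{lemma8}), and the logarithmic complexity.

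The only real difference is the last step. The paper sets $2\exp(\cdots)=\delta$, obtains a polynomial equation in $\epsilon^{q-1}$, and bounds its positive root with Lemma~\ref{lemma10}; this implicitly uses that the failure bound decreases in $\epsilon$. You instead take $\varepsilon=4\Omega t$ with $t=\max\{(\log(2/\delta)/m)^{1/2},(c_s/m)^{1/(2+s)}\}$ and verify it directly. This is shorter and gives identical constants.

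To make that verification work, use the complexity bound in the form $\mathcal{N}(\mathcal{G}_\rho^\lambda,r)\le e^{c_s(\Omega/r)^s}$, i.e. the definition with $R=\Omega$. This is legitimate because $\|h\|_\infty\le\Omega$ on $\mathcal{G}_\rho^\lambda$, and it is exactly what the paper does. Then $c_s(4\Omega/\varepsilon)^s=c_st^{-s}\le mt^2$, and the failure probability is at most $2e^{c_st^{-s}-2mt^2}\le 2e^{-mt^2}\le\delta$. The factor $4$ comes out exactly, so your hedge about it is unnecessary. With your unnormalized $e^{c_s(1/r)^s}$, the check $c_s(4/\varepsilon)^s\le m\varepsilon^2/(16\Omega^2)$ can fail when $\Omega<1$.

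Also, the bracket in your Step 1 has the opposite sign: it should read $\frac1m\sum_i W_\rho^\lambda(z_i)(Aw)(x_i)-\int_Z W_\rho^\lambda\,(Aw)\,d\rho$. This is harmless, because you immediately pass to the supremum of absolute deviations.
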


\noindent    In the next theorem, we show that as $m$ increases,
the expression in the RHS of Theorem \ref{thm1} can be more
accurately determined.
\vspace{0.3cm}
\begin{theorem}\label{thm2}
	Let $\delta\in(0,1)$ and suppose that the assumptions of Theorem \ref{thm1} hold true. Then, if $m$ is large enough, there holds with confidence $1-\delta$ that
	\begin{equation}\label{eqthm2}
		\left\|u_z^{\lambda}-u_{\rho}\right\|_{B_1} \leq  \sqrt[q-1]{\frac{4p\Omega}{\lambda
				c_q}}\times\sqrt[(2+s)(q-1)]{\frac{c_{s}}{m}}+c_{\beta}{\lambda}^{\beta}.
	\end{equation}
\end{theorem}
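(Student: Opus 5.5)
Theorem \ref{thm2} will follow from Theorem \ref{thm1} once we show that, for $m$ large, the second term in the maximum in \eqref{eq22} dominates the first. We may therefore take Theorem \ref{thm1} as given and reduce the claim to an elementary comparison of the two quantities
\[
a_m:=\left(\frac{\log(2/\delta)}{m}\right)^{\frac{1}{2(q-1)}},\qquad b_m:=\left(\frac{c_s}{m}\right)^{\frac{1}{(2+s)(q-1)}} .
\]
Once $a_m\le b_m$ is established, \eqref{eq22} becomes exactly \eqref{eqthm2}, since $\sqrt[(2+s)(q-1)]{c_s/m}=b_m$.

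Since $q>1$, the map $t\mapsto t^{1/(q-1)}$ is increasing on $[0,\infty)$. Hence $a_m\le b_m$ is equivalent to
\[
\left(\frac{\log(2/\delta)}{m}\right)^{\frac12}\le \left(\frac{c_s}{m}\right)^{\frac{1}{2+s}},
\]
that is,
\[
\bigl(\log(2/\delta)\bigr)^{\frac{2+s}{2}}\, m^{-\frac{2+s}{2}}\le c_s\, m^{-1}.
\]
This in turn is equivalent to
\[
m^{s/2}\ge \frac{\bigl(\log(2/\delta)\bigr)^{(2+s)/2}}{c_s}.
\]
For $s>0$ this holds whenever
\[
m\ge m_0(\delta):=\left(\frac{\bigl(\log(2/\delta)\bigr)^{(2+s)/2}}{c_s}\right)^{2/s}=\frac{\bigl(\log(2/\delta)\bigr)^{(2+s)/s}}{c_s^{2/s}},
\]
which makes "$m$ large enough" explicit. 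The threshold depends only on $\delta$, $s$ and $c_s$, not on $\lambda$. This matters because $\lambda$ enters the bound only through the common prefactor $\sqrt[q-1]{4p\Omega/(\lambda c_q)}$, which is the same in both theorems.

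The only delicate point is the degenerate case $s=0$. Then both sides scale like $m^{-1}$, and the comparison reduces to $\log(2/\delta)\le c_s$, which is not a largeness condition on $m$. I would handle this in one of two ways. The first is to note that the log-complexity bound \eqref{cov} still holds with $s$ replaced by any $s'>0$ (after adjusting $c_s$), so one may assume $s>0$. The second is to state $s>0$ explicitly, as is implicit in the theorem. For $s>0$ the argument above is complete: for $m\ge m_0(\delta)$ the maximum in \eqref{eq22} equals $b_m$. Substituting this into Theorem \ref{thm1}, on the same event of probability at least $1-\delta$, gives \eqref{eqthm2}.
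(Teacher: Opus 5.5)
Your proposal is correct and follows essentially the paper's route: the paper factors out $c_s^{1/((2+s)(q-1))}$ and applies Lemma~\ref{lemma11} with $n_1=2(q-1)$, $n_2=(2+s)(q-1)$ and $D(\delta)=\bigl(\log(2/\delta)/c_s^{2/(2+s)}\bigr)^{1/(2(q-1))}$, and the resulting threshold $D(\delta)^{1/(1/n_1-1/n_2)}$ is exactly your $m_0(\delta)=(\log(2/\delta))^{(2+s)/s}/c_s^{2/s}$. Your caveat about $s=0$ is well taken: Lemma~\ref{lemma11} needs $n_1<n_2$, so the paper's argument also tacitly requires $s>0$.
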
 

\noindent Finally we present a corollary of the above theorem.
\begin{cor}\label{thm3}
	Let $\delta\in(0,1)$ and suppose that the assumptions of Theorem \ref{thm1} and \ref{thm2} hold true. Then, for
	\begin{equation}\label{lambda}
		\lambda={m^{-\frac{1}{(2+s)(1+\beta(q-1))}}}
	\end{equation}
	with a confidence of $1-\delta$, we have the following convergence rate
	\begin{equation}\label{conv}
		\left\|u_z^{\lambda}-u_{\rho}\right\|_{B_1}=O(\lambda^{\beta}).
	\end{equation}
\end{cor}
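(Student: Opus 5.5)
The plan is to substitute the choice \eqref{lambda} directly into the high-confidence bound \eqref{eqthm2} of Theorem \ref{thm2} and check that both terms on its right-hand side are $O(\lambda^{\beta})$. Theorem \ref{thm2} already supplies the event of probability at least $1-\delta$ (for $m$ large enough) on which \eqref{eqthm2} holds. All that remains is therefore a deterministic computation on that event, together with a check that the constants do not depend on $m$.

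\textbf{Step 1: bound $\Omega(\lambda)$.} By \eqref{lambda}, $\lambda = m^{-1/((2+s)(1+\beta(q-1)))}\le 1$ for every $m\ge 1$, since $s\ge 0$, $\beta>0$ and $q>1$. Hence $\lambda^\beta\le 1$, and the definition \eqref{omega} gives
\[
\Omega(\lambda)\le \Omega_0:=(M+k\|u_\rho\|_{B_1}+kc_\beta)^{p-1}k .
\]
This constant is independent of $m$ and $\lambda$, and it also does not depend on $\delta$.

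\textbf{Step 2: match the exponents.} Inverting \eqref{lambda} gives $m^{-1/(2+s)}=\lambda^{1+\beta(q-1)}$. Taking the $(q-1)$-th root,
\[
\sqrt[(2+s)(q-1)]{\frac{c_s}{m}} = c_s^{\frac{1}{(2+s)(q-1)}}\,\lambda^{\frac{1+\beta(q-1)}{q-1}} .
\]
Multiplying by the prefactor $\sqrt[q-1]{4p\Omega/(\lambda c_q)}\le (4p\Omega_0/c_q)^{1/(q-1)}\lambda^{-1/(q-1)}$, the powers of $\lambda$ combine to
\[
\lambda^{\frac{1+\beta(q-1)}{q-1}-\frac{1}{q-1}}=\lambda^{\beta}.
\]
So the first term of \eqref{eqthm2} is at most $(4p\Omega_0/c_q)^{1/(q-1)}c_s^{1/((2+s)(q-1))}\lambda^{\beta}$. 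The second term is already $c_\beta\lambda^\beta$. Adding the two gives $\|u_z^\lambda-u_\rho\|_{B_1}\le C\lambda^\beta$ with $C$ independent of $m$, which is \eqref{conv}.

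\textbf{Main subtlety.} The only delicate point is that Theorem \ref{thm2} requires $m$ "large enough", and this threshold could in principle depend on $\lambda$, which now varies with $m$. I would revisit the condition in Theorem \ref{thm2}. It presumably asks that the second argument of the max in \eqref{eq22} dominate the first, i.e.
\[
\Bigl(\tfrac{c_s}{m}\Bigr)^{\frac{1}{2+s}}\ge \Bigl(\tfrac{\log(2/\delta)}{m}\Bigr)^{1/2}.
\]
This condition does not involve $\lambda$, so it remains valid along the sequence $\lambda=\lambda(m)$. In any case the statement of Corollary \ref{thm3} assumes the hypotheses of Theorem \ref{thm2}, so it suffices to note this compatibility. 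The rate then holds with confidence $1-\delta$ for all sufficiently large $m$.
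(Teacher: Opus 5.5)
Your proposal is correct and follows the paper's own proof. Both substitute \eqref{lambda} into the bound of Theorem~\ref{thm2}, combine the exponents of $m$ so that both terms become $m^{-\beta/((2+s)(1+\beta(q-1)))}=\lambda^{\beta}$, and treat $\Omega(\lambda)$ as bounded. Your uniform bound $\Omega(\lambda)\le\Omega_0$ from $\lambda\le 1$, and your check that the large-$m$ threshold of Theorem~\ref{thm2} does not depend on $\lambda$, are slightly more careful than the paper's limiting argument $\Omega\to(M+k\|u_\rho\|_{B_1})^{p-1}k$, but the route is the same.
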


\section{Proofs}
\label{proofs}
Now we present the proofs for the above stated theorems and corollary. We obtain a high probability upper bound for $\|u_z^{\lambda}-u_{\rho}\|_{B_1}$ in terms of the number of data points $m$. This is done by splitting the total error into sample error $\|u_z^{\lambda}-u_{\rho}^{\lambda}\|_{B_1}$ and the approximation error  $\|u_{\rho}^{\lambda}-u_{\rho}\|_{B_1}$, and subsequently establish an upper bound for the former.

The proofs are presented as a sequence of lemmas. In lemma \ref{lemma4} and \ref{lemma5}, we establish an explicit form for $\partial\mathcal{E}_{\rho}$ and $j_q(u_{\rho}^{\lambda})$. Lemma \ref{lemma7} shows an upper bound for the difference between $u_{\rho}^{\lambda}$ and $u_{\gamma}^{\lambda}$, where the latter is the regularized solution w.r.t the distribution $\gamma$. By the end of lemma \ref{lemma9}, we consider $\gamma$ to be the empirical distribution to obtain a bound for $\|u_z^{\lambda}-u_{\rho}^{\lambda}\|_{B_1}$ in terms of the covering number. Finally, we combine these lemmas to prove the stated theorems.

Now we start with the following lemma.

\begin{lemma}\label{lemma4}
	Let $(B_1,\|\cdot\|_{B_1})$ be a $q$-uniform convex Banach space and $V$ be a real vector space of functions
	that maps a compact metric space $X$ to $\mathbb{R}$. Consider $A:B_1\to V$ to be an injective linear operator
	such that the assumptions $(A1)$, $(A2)$ and $(A3)$ are true.
	Let $\mathcal{E}_{\rho}$ be as defined in \eqref{true}, then 
	for $u_{0} \in {B_1} $
	\begin{equation}
		(\partial \mathcal{E}_{\rho})\left(u_{0}\right)=\left\{-p \int_{Z}\Big(\left|y-(Au_0)(x)\right|^{p-1}\text{sgn}[y-(Au_0)(x)]\Big)H(\cdot,x) d
		\rho\right\}.
	\end{equation}
\end{lemma}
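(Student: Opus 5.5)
The plan is to show that $\mathcal{E}_{\rho}$ is Gâteaux differentiable on $B_1$ and to compute its derivative explicitly. Since $\mathcal{E}_{\rho}$ is convex and continuous, Gâteaux differentiability at $u_0$ forces the subdifferential to be the singleton consisting of the Gâteaux derivative. Convexity is immediate: $u\mapsto (Au)(x)$ is linear and $t\mapsto|y-t|^p$ is convex for $p>1$. Continuity comes from (A2): for each $x$ we have $|(Au)(x)|\le k\|u\|_{B_1}$, so on bounded sets the integrand is uniformly bounded by $(M+k\|u\|_{B_1})^p$. Dominated convergence then gives continuity, and in particular the integral in \eqref{err} is finite.

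For the derivative, fix $u_0,v\in B_1$ and set $\phi(t)=|y-(Au_0)(x)-t(Av)(x)|^p$. Since $p>1$, $\phi$ is $C^1$ with
\[
\phi'(0)=-p\,|y-(Au_0)(x)|^{p-1}\mathrm{sgn}[y-(Au_0)(x)]\,(Av)(x).
\]
By the mean value theorem, for $|t|\le1$ the difference quotient is bounded by
\[
p\,(M+k\|u_0\|_{B_1}+k\|v\|_{B_1})^{p-1}\,k\|v\|_{B_1},
\]
a constant, and hence integrable with respect to the probability measure $\rho$. Dominated convergence therefore yields
\[
\lim_{t\to0}\frac{\mathcal{E}_{\rho}(u_0+tv)-\mathcal{E}_{\rho}(u_0)}{t}=-p\int_Z W_0(x,y)\,(Av)(x)\,d\rho,
\]
where $W_0$ denotes the factor $|y-(Au_0)(x)|^{p-1}\mathrm{sgn}[\cdot]$. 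This expression is linear in $v$ and bounded by $p(M+k\|u_0\|)^{p-1}k\|v\|_{B_1}$, so it defines an element of $B_1^*$.

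Next, I would identify this functional with the element claimed in the statement. By the reproducing property \eqref{repty}, $(Av)(x)=\langle v,H(\cdot,x)\rangle_{B_1\times B_1^*}$. The task is then to move the pairing outside the integral:
\[
\int_Z W_0\,\langle v,H(\cdot,x)\rangle\,d\rho=\Big\langle v,\int_Z W_0\,H(\cdot,x)\,d\rho\Big\rangle.
\]
This requires the $B_1^*$-valued integral to exist as a Bochner integral, and I expect this to be the main obstacle. I would handle it as follows. The map $x\mapsto H(\cdot,x)$ is continuous into $B_1^*$ (it is uniformly continuous by the hypothesis carried into Theorem \ref{thm1}), and $X$ is compact, so its range is compact and hence separable. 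Its norm is bounded by $k$, because $\|H(\cdot,x)\|_{B_1^*}=\sup_{\|v\|\le1}|(Av)(x)|\le k$. The scalar factor $W_0$ is measurable and bounded by $(M+k\|u_0\|)^{p-1}$. The integrand is therefore strongly measurable and norm-integrable, so the Bochner integral exists. Bounded linear functionals, in particular evaluation against $v$, commute with Bochner integrals, which gives the displayed identity. If the continuity of $H$ were not available, I would instead define the element via weak-* (Gelfand) integration; reflexivity of $B_1$ makes this consistent.

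Finally, I would invoke the standard fact that a continuous convex function that is Gâteaux differentiable at $u_0$ has $\partial\mathcal{E}_{\rho}(u_0)=\{\mathcal{E}_{\rho}'(u_0)\}$. For the proof of this fact: the gradient inequality $\mathcal{E}(v)-\mathcal{E}(u_0)\ge\langle v-u_0,\mathcal{E}'(u_0)\rangle$ follows from convexity of $t\mapsto\mathcal{E}(u_0+t(v-u_0))$. For uniqueness, any subgradient $u^*$ satisfies $\langle w,u^*\rangle\le\langle w,\mathcal{E}'(u_0)\rangle$ for all $w$; replacing $w$ by $-w$ gives equality. This yields exactly the set in the statement.
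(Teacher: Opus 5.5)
Your argument is correct, but it follows a different route from the paper.

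The paper works pointwise. For fixed $(x,y)$ it applies a subdifferential chain rule (Showalter, Prop.~7.8) to $F(u)=|y-S_x(u)|^p$. Using $S_x^*r=rH(\cdot,x)$, it obtains $\partial F(u_0)=-p|y-(Au_0)(x)|^{p-1}\text{sgn}[y-(Au_0)(x)]\,H(\cdot,x)$. It then integrates the subgradient inequality $F(v)-F(u_0)\ge\langle v-u_0,\partial F(u_0)\rangle_{B_1\times B_1^*}$ over $Z$.

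You instead compute the G\^ateaux derivative of $\mathcal{E}_\rho$ by differentiating under the integral (mean value theorem plus dominated convergence). You identify it with the $B_1^*$-valued integral through a Bochner-integral argument. Finally you use that a convex function G\^ateaux differentiable at $u_0$ has $\partial\mathcal{E}_\rho(u_0)=\{\mathcal{E}_\rho'(u_0)\}$.

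The paper's route is shorter and citation-based. However, integrating the pointwise inequality only shows that the displayed integral is \emph{an} element of $\partial\mathcal{E}_\rho(u_0)$. The equality with a singleton is asserted rather than proved, and moving the pairing outside the integral is taken for granted.

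Your route establishes exactly the singleton identity, which is what Lemma~\ref{lemma5} really uses. There one must know that the element of $\partial\mathcal{E}_\rho(u_\rho^\lambda)$ furnished by the sum rule is the displayed integral. Your route also makes the integrability of the $B_1^*$-valued integrand explicit.

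The price is that your Bochner argument relies on continuity of $x\mapsto H(\cdot,x)$, which is a hypothesis of Theorem~\ref{thm1} rather than of the lemma. Your fallback costs nothing, though. Your bound $p(M+k\|u_0\|_{B_1})^{p-1}k\|v\|_{B_1}$ already shows that $v\mapsto -p\int_Z W_0\,(Av)(x)\,d\rho$ lies in $B_1^*$. This functional can serve as the weak-$*$ definition of the integral, and reflexivity is not needed for that.

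Both arguments tacitly assume that $x\mapsto(Au)(x)$ is measurable, which is needed for $\mathcal{E}_\rho$ to be defined at all.
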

\begin{proof}
	Let us take the function $F:B_1\to\mathbb{R}$ identified as
	\begin{equation}
		F(u)=\left|y-(Au)(x)\right|^p=\left|y-S_x(u)\right|^p
	\end{equation}
	
	\noindent From \cite{Showalter1997}, Proposition $7.8$, it can be seen that at $u_0$
	\begin{equation*}
		(\partial F)(u_0)=\left\{-S_x^*~(\partial
		I){[y-(Au_0)(x)]}~\right\}.
	\end{equation*}
	
	\noindent   Here $I(r)=|r|^p,~I:\mathbb{R}\to\mathbb{R}$ is a real-valued differentiable function acting at the point $y-(Au_0)(x)$.
	From \cite{rockafellar1970convex} Theorem $25.1$, we use the property that the subdifferential of a differentiable function is the gradient to show that
	
	\begin{equation}\label{eq30}
		(\partial F)(u_0)=-S_x^*\Big(p
		\left|y-(Au_0)(x)\right|^{p-1}\text{sgn}[y-(Au_0)(x)]\Big).
	\end{equation}
	
	\noindent By using the fact that for any $r\in\mathbb{R}$ and $z\in B_1,$
	\begin{align*}
		\langle z,S_x^*r\rangle_{B_1\times B_1^*}=r(S_xz)=r\langle z,H(\cdot,x)\rangle_{B_1\times B_1^*}=\langle z,rH(\cdot,x)\rangle_{B_1\times
			B_1^*}.
	\end{align*}
	
	\noindent   We have $S_x^*r=rH(\cdot,x)$. Then \eqref{eq30} attains the form
	
	\begin{equation}
		(\partial F)(u_0)=-\Big(p \left|y-(Au_0)(x)\right|^{p-1}\text{sgn}[y-(Au_0)(x)]\Big)H(\cdot,x)
	\end{equation}
	
	\noindent   Now, by the definition of subgradient,
	\begin{equation*}
		F(v)-F(u_0)\geq\langle v-u_0,(\partial F)(u_0)\rangle_{B_1\times B_1^*}\quad\forall v\in
		B_1.
	\end{equation*}
	
	\noindent   By the properties of the integral, we have the following inequality
	\begin{align*}
		\int_Z F(v)-F(u_0)~d\rho&\geq\int_Z\langle v-u_0,(\partial F)(u_0)\rangle_{B_1\times B_1^*} d\rho\\
		&=\int_Z(\partial F)(u_0)(v-u_0)d\rho .
	\end{align*}
	
	\noindent   In view of the right hand side of the above relation, being an integral operator from $B_1$ to $\mathbb{R}$,
	\begin{align*}
		\int_Z F(v) ~d\rho-\int_Z F(u_0) ~d\rho \geq\Big\langle v-u_0,\int_Z(\partial F)(u_0)~d\rho\Big\rangle_{B_1\times
			B_1^*}.
	\end{align*}
	
	\noindent       From the definition of $F$, the following inequality is
	immediate.
	\begin{align*}
		\int_Z \Big|y-(Av)(x)\Big|^p d\rho-\int_Z \left|y-(Au_0)(x)\right|^p d\rho \geq\Big\langle v-u_0,\int_Z(\partial F)(u_0)~d\rho\Big\rangle_{B_1\times
			B_1^*}.
	\end{align*}
	
	\noindent   That is, $\int_Z(\partial F)(u_0)d\rho$ is the subgradient of $\mathcal{E}$ at $u_0$.
	Therefore, by substituting the value of $(\partial F)(u_0)$, we can see that
	\begin{equation*}
		(\partial \mathcal{E}_{\rho})\left(u_{0}\right)=\left\{-p \int_{Z}\Big( \left|y-(Au_0)(x)\right|^{p-1}\text{sgn}[y-(Au_0)(x)]\Big)H(\cdot,x) d
		\rho\right\}.
	\end{equation*}
	This proves the lemma.
\end{proof}
\vspace{0.1cm}

\begin{lemma}\label{lemma5}
	Suppose that $J_{q}(u)$ is as in \eqref{eq6} and $u_{\rho}^{\lambda}$ be defined as in \eqref{regtrue}.
	Then, there exists a $j_{q}\left(u_{\rho}^{\lambda}\right) \in J_{q}\left(u_{\rho}^{\lambda}\right)$ which satisfies the equation
	\begin{equation}\label{eq33}
		\lambda j_{q}\left(u_{\rho}^{\lambda}\right)=p \int_{Z}\Big( \left|y-(Au_{\rho}^{\lambda})(x)\right|^{p-1}\text{sgn}[y-(Au_{\rho}^{\lambda})(x)]\Big)H(\cdot,x) d
		\rho .
	\end{equation}
\end{lemma}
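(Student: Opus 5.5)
Since $u_{\rho}^{\lambda}$ minimizes the convex functional $\Phi(u):=\mathcal{E}_{\rho}(u)+\frac{\lambda}{q}\|u\|_{B_1}^q$ over $B_1$, the optimality condition gives $0\in(\partial\Phi)(u_{\rho}^{\lambda})$. The plan is to split this subdifferential into the subdifferentials of the two summands, identify the first one with Lemma \ref{lemma4}, and read off \eqref{eq33}.

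\textbf{Step 1 (sum rule).} Both summands are convex and finite on all of $B_1$. They are also continuous. For the regularizer this is clear. For $\mathcal{E}_{\rho}$, assumption (A2) gives $|(Au)(x)|\le k\|u\|_{B_1}$ uniformly in $x$, and $|y|\le M$ on $Z$. By the mean value theorem, $\big||y-(Au)(x)|^p-|y-(Av)(x)|^p\big|\le p\,(M+k\max(\|u\|_{B_1},\|v\|_{B_1}))^{p-1}k\|u-v\|_{B_1}$. Integrating this against the probability measure $\rho$ shows that $\mathcal{E}_{\rho}$ is locally Lipschitz, hence continuous. Since both functions are continuous everywhere, the Moreau--Rockafellar sum rule applies:
\[
(\partial\Phi)(u)=(\partial\mathcal{E}_{\rho})(u)+\lambda J_q(u),
\]
using $\partial(\tfrac{\lambda}{q}\|\cdot\|^q)=\lambda J_q$.

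\textbf{Step 2 (conclusion).} From $0\in(\partial\Phi)(u_{\rho}^{\lambda})$ and Step 1, there are $w\in(\partial\mathcal{E}_{\rho})(u_{\rho}^{\lambda})$ and $j_q(u_{\rho}^{\lambda})\in J_q(u_{\rho}^{\lambda})$ with $0=w+\lambda j_q(u_{\rho}^{\lambda})$. By Lemma \ref{lemma4}, $(\partial\mathcal{E}_{\rho})(u_{\rho}^{\lambda})$ is the singleton
\[
w=-p\int_Z |y-(Au_{\rho}^{\lambda})(x)|^{p-1}\,\text{sgn}[y-(Au_{\rho}^{\lambda})(x)]\,H(\cdot,x)\,d\rho .
\]
Substituting this $w$ gives exactly \eqref{eq33}.

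\textbf{Main obstacle.} The delicate point is whether the integral in Lemma \ref{lemma4} is a genuine element of $B_1^*$, and whether the sum rule may be invoked in a form that permits set addition. For the integral, the integrand is bounded in $B_1^*$-norm, since $\|H(\cdot,x)\|_{B_1^*}=\sup_{\|u\|\le1}|(Au)(x)|\le k$. It is also measurable in $x$, because $H(\cdot,x)$ is uniformly continuous in $x$ by hypothesis, so it is Bochner integrable. Alternatively, the integral can be interpreted weak-$*$, as a functional $v\mapsto\int_Z(\cdot)(Av)(x)\,d\rho$, which is bounded by $pk(M+k\|u_{\rho}^{\lambda}\|_{B_1})^{p-1}\|v\|_{B_1}$. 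For the sum rule, I would cite the version requiring continuity of one summand at a point of the domain of the other, which Step 1 supplies.
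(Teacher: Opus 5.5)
Your proposal is correct and follows essentially the same route as the paper: optimality gives $0\in(\partial\Phi)(u_\rho^\lambda)$, and the sum rule splits this as $(\partial\mathcal{E}_\rho)(u_\rho^\lambda)+\lambda J_q(u_\rho^\lambda)$ (the paper cites Zeidler's Theorem 47B, using only continuity of $\frac{\lambda}{q}\|\cdot\|_{B_1}^q$), after which the singleton form of Lemma \ref{lemma4} identifies the first summand and yields \eqref{eq33}. Your Lipschitz estimate for $\mathcal{E}_\rho$ is more than the sum rule requires, and your Bochner-integrability remark usefully makes explicit a point the paper leaves implicit, namely that the integral defines a genuine element of $B_1^*$.
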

\begin{proof}
	Since ${u_{\rho}^{\lambda}}$ is the solution of \eqref{regtrue}, we have
	\begin{equation}\label{eq34}
		\left.0 \in
		\partial\left(\mathcal{E}_{\rho}(u)+\frac{\lambda}{q}\|u\|_{B_1}^{q}\right)\right|_{u=u_{\rho}^{\lambda}}.
	\end{equation}
	As both $\mathcal{E}_{\rho}$ and $\|\cdot\|_{B_1}^{q}$ are convex functions, and $\|\cdot\|_{B_1}^q$ is continuous, by Theorem $47B$(\cite{zeidler1985nonlinear}),
	\begin{align*}
		\left.\partial\left(\mathcal{E}_{\rho}(u)+\frac{\lambda}{q}\|u\|_{{B_1}}^{q}\right)\right|_{u=u_{\rho}^{\lambda}}
		=\left.\partial\left(\mathcal{E}_{\rho}(u)\right)\right|_{u=u_{\rho}^{\lambda}}+\left.\partial\left(\frac{\lambda}{q}\|u\|_{{B_1}}^{q}\right)\right|_{u=u_{\rho}^{\lambda}}
		.
	\end{align*}

	\noindent Hence, by Lemma \ref{lemma4} and \eqref{eq34},
	\begin{equation*}
		0 \in\left\{-p \int_{Z}\Big( \left|y-(Au_{\rho}^{\lambda})(x)\right|^{p-1}\text{sgn}[y-(Au_{\rho}^{\lambda})(x)]\Big)H(\cdot,x) d \rho\right\}+\lambda
		J_{q}\left(u_{\rho}^{\lambda}\right) .
	\end{equation*}
	
	\noindent Therefore, there exist some $j_{q}\left(u_{\rho}^{\lambda}\right) \in J_{q}\left(u_{\rho}^{\lambda}\right)$ such that
	\begin{equation*}
		0 =-p \int_{Z}\Big( \left|y-(Au_{\rho}^{\lambda})(x)\right|^{p-1}\text{sgn}[y-(Au_{\rho}^{\lambda})(x)]\Big)H(\cdot,x) d \rho+\lambda
		j_{q}\left(u_{\rho}^{\lambda}\right).
	\end{equation*}
	This shows \eqref{eq33} is true and hence the proof.
\end{proof}
\vspace{0.1cm}

\begin{lemma}\label{lemma7}
	Let $B_1$ be a $q$-uniform convex Banach space and $H(\cdot, x),~ \mathcal{E}_{\rho}(u)$ be defined respectively as in \eqref{Hxeqn} and \eqref{err}.
	If $u_{\rho}^{\lambda}$ and $u_{\gamma}^{\lambda}$ are the minimizers given by \eqref{regtrue} for the distributions $\rho$ and $\gamma$, respectively, then,
	\begin{equation}
		\label{eqnlemma51}
		\begin{aligned}
			\n{u_{\rho}^{\lambda}-u_{\gamma}^{\lambda}}_{B_1}\leq \Big(\frac{p}{\lambda c_q}\times \Big\|&\int_{Z}W_{\rho}^{\lambda}(x,y) H(\cdot, x) d \rho \\
			& -\int_{Z}W_{\rho}^{\lambda}(x,y) H(\cdot, x) d \gamma \Big\|_{B_1^*}\Big)^{1
				/(q-1)},
		\end{aligned}
	\end{equation}
	where $c_q$ is as in \eqref{eq6} and $W_{\rho}^{\lambda}(x,y)$ as in
	\eqref{eqnW}.
	\end{lemma}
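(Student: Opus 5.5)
We will compare the first-order optimality conditions of the two regularized problems and use $q$-uniform convexity of $B_1$ together with monotonicity of the subdifferential of the data term. By Lemma \ref{lemma5}, applied once with $\rho$ and once with $\gamma$ in place of $\rho$ (the argument there uses nothing about $\rho$ beyond being a probability measure on $Z$), there exist $j_q(u_\rho^\lambda)\in J_q(u_\rho^\lambda)$ and $j_q(u_\gamma^\lambda)\in J_q(u_\gamma^\lambda)$ with
\begin{align*}
\lambda j_q(u_\rho^\lambda)&=p\int_Z W_\rho^\lambda(x,y)H(\cdot,x)\,d\rho,\\
\lambda j_q(u_\gamma^\lambda)&=p\int_Z W_\gamma^\lambda(x,y)H(\cdot,x)\,d\gamma,
\end{align*}
where $W_\gamma^\lambda$ is defined as in \eqref{eqnW} with $u_\gamma^\lambda$ in place of $u_\rho^\lambda$.

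\textbf{Step 1.} Add the two instances of \eqref{eq6}, once with $(u,v)=(u_\gamma^\lambda,u_\rho^\lambda)$ and once with $(u,v)=(u_\rho^\lambda,u_\gamma^\lambda)$. Writing $w=u_\gamma^\lambda-u_\rho^\lambda$, the norm terms cancel and we obtain
\[
2c_q\|w\|_{B_1}^q\le \langle w, j_q(u_\gamma^\lambda)-j_q(u_\rho^\lambda)\rangle_{B_1\times B_1^*}.
\]

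\textbf{Step 2.} Substitute the optimality conditions. Then $\lambda\langle w, j_q(u_\gamma^\lambda)-j_q(u_\rho^\lambda)\rangle$ equals
\[
p\Big\langle w,\int_Z W_\gamma^\lambda H(\cdot,x)\,d\gamma-\int_Z W_\rho^\lambda H(\cdot,x)\,d\gamma\Big\rangle
+p\Big\langle w,\int_Z W_\rho^\lambda H(\cdot,x)\,d\gamma-\int_Z W_\rho^\lambda H(\cdot,x)\,d\rho\Big\rangle .
\]
By \eqref{repty}, the first term is
\[
p\int_Z\big(W_\gamma^\lambda-W_\rho^\lambda\big)\big((Au_\gamma^\lambda)(x)-(Au_\rho^\lambda)(x)\big)\,d\gamma .
\]
Set $a=y-(Au_\gamma^\lambda)(x)$ and $b=y-(Au_\rho^\lambda)(x)$. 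The integrand is $-\big(\phi(a)-\phi(b)\big)(a-b)$ with $\phi(t)=|t|^{p-1}\mathrm{sgn}(t)$ nondecreasing, so the integrand is $\le 0$. This is precisely monotonicity of $\partial\mathcal{E}_\gamma$ from Lemma \ref{lemma4}, and it lets us drop the first term. The second term is bounded by
\[
p\|w\|_{B_1}\Big\|\int_Z W_\rho^\lambda H\,d\rho-\int_Z W_\rho^\lambda H\,d\gamma\Big\|_{B_1^*}.
\]

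\textbf{Step 3.} Combining Steps 1 and 2 gives
\[
2\lambda c_q\|w\|^q\le p\|w\|\,\|\cdots\|_{B_1^*}.
\]
Divide by $\|w\|$; the case $w=0$ is trivial. Since $2c_q\ge c_q$, taking the $(q-1)$-th root yields \eqref{eqnlemma51}.

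\textbf{Main obstacle.} The delicate point is making the duality manipulations legitimate: moving $w$ inside the Bochner-type integral $\int_Z W H(\cdot,x)\,d\gamma$ and identifying $\langle w,H(\cdot,x)\rangle$ with $(Aw)(x)$. We will handle this as in the proof of Lemma \ref{lemma4}, using boundedness from (A2) and the uniform continuity of $H(\cdot,x)$ to guarantee measurability and integrability. We also need Lemma \ref{lemma5} to hold for a general $\gamma$, which is immediate from its proof.
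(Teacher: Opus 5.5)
Your argument is correct, but it follows a different route from the paper's.

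\textbf{The paper's route.} The paper never uses an optimality condition at $u_\gamma^\lambda$. It combines two ingredients:
\begin{enumerate}
\item the subgradient inequality for $\mathcal{E}_\gamma$ at $u_\rho^\lambda$, with the subgradient $-p\int_Z W_\rho^\lambda H(\cdot,x)\,d\gamma$ from Lemma \ref{lemma4};
\item the zero-order comparison $\mathcal{E}_\gamma(u_\gamma^\lambda)+\frac{\lambda}{q}\|u_\gamma^\lambda\|_{B_1}^q\le\mathcal{E}_\gamma(u_\rho^\lambda)+\frac{\lambda}{q}\|u_\rho^\lambda\|_{B_1}^q$.
\end{enumerate}
It then applies \eqref{eq6} once, with $(u,v)=(u_\gamma^\lambda,u_\rho^\lambda)$, and invokes Lemma \ref{lemma5} only for $\rho$. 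The $\gamma$-subgradient at $u_\rho^\lambda$ and the $\rho$-Euler equation carry the same weight $W_\rho^\lambda$. Their difference is therefore already the quantity in \eqref{eqnlemma51}, and no cross term $W_\gamma^\lambda-W_\rho^\lambda$ ever appears.

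\textbf{Your route.} You write Euler equations at both minimizers and symmetrize \eqref{eq6} into strong monotonicity of $J_q$. You then remove the cross term by monotonicity of $t\mapsto|t|^{p-1}\mathrm{sgn}(t)$, i.e.\ monotonicity of $\partial\mathcal{E}_\gamma$.

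\textbf{What each buys.} Your route gains a factor $2$ in the constant: $\|u_\rho^\lambda-u_\gamma^\lambda\|_{B_1}^{q-1}\le\frac{p}{2\lambda c_q}\|\cdots\|_{B_1^*}$. The price is that you need Lemma \ref{lemma5} for $\gamma$ as well, i.e.\ the sum rule plus identification of $\partial\mathcal{E}_\gamma(u_\gamma^\lambda)$ for the empirical measure. This is harmless here, since $\mathcal{E}_\gamma$ is G\^ateaux differentiable for $p>1$. The paper's one-sided argument uses the empirical minimizer only through a function-value inequality. It therefore survives, with an additive slack, if $u_z^\lambda$ is only an approximate minimizer, whereas yours requires the exact first-order condition at $u_\gamma^\lambda$.
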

	\begin{proof}
From the definition of $(\partial \mathcal{E}_{\gamma})\left(u_{\rho}^{\lambda}\right),$ we get
\begin{equation*}
	\mathcal{E}_{\gamma}\left(u_{\gamma}^{\lambda}\right)-\mathcal{E}_{\gamma}\left(u_{\rho}^{\lambda}\right)
	\geq\left\langle u_{\gamma}^{\lambda}-u_{\rho}^{\lambda},(\partial \mathcal{E}_{\gamma})\left(u_{\rho}^{\lambda}\right)\right\rangle_{B_1\times
		B_1^*}.
\end{equation*}
From Lemma \ref{lemma4}, we have,
\begin{align}\label{eq39}
	&\mathcal{E}_{\gamma}\left(u_{\gamma}^{\lambda}\right)-\mathcal{E}_{\gamma}\left(u_{\rho}^{\lambda}\right)
	\geq\notag\\
	&\Big\langle u_{\gamma}^{\lambda}-u_{\rho}^{\lambda},-p \int_{Z}\left|y-(Au_{\rho}^{\lambda})(x)\right|^{p-1}\text{sgn}[y-(Au_{\rho}^{\lambda})(x)] H(\cdot, x) d \gamma\Big\rangle_{B_1\times
		B_1^*}.
\end{align}
Also, by the definitions of $u_{\rho}^{\lambda}$ and $u_{\gamma}^{\lambda}$
\begin{align*}
	&~\mathcal{E}_{\gamma}\left(u_{\gamma}^{\lambda}\right)+\frac{\lambda}{q}\left\|u_{\gamma}^{\lambda}\right\|_{B_1}^{q}\leq\mathcal{E}_{\gamma}\left(u_{\rho}^{\lambda}\right)+\frac{\lambda}{q}\left\|u_{\rho}^{\lambda}\right\|_{B_1}^{q}\\
	\Rightarrow & ~\mathcal{E}_{\gamma}\left(u_{\gamma}^{\lambda}\right)-\mathcal{E}_{\gamma}\left(u_{\rho}^{\lambda}\right)+\frac{\lambda}{q}\left(\left\|u_{\gamma}^{\lambda}\right\|_{B_1}^{q}-\left\|u_{\rho}^{\lambda}\right\|_{B_1}^{q}\right)\leq
	0.
\end{align*}
From \eqref{eq39}, the above inequality can be written as
\begin{align*}
	0\geq ~ \Big\langle u_{\gamma}^{\lambda}-u_{\rho}^{\lambda},-p\int_{Z}\left|y-(Au_{\rho}^{\lambda})(x)\right|^{p-1}&\text{sgn}[y-(Au_{\rho}^{\lambda})(x)] H(\cdot, x) d \gamma\Big\rangle_{B_1\times B_1^*} \\
	&
	+\frac{\lambda}{q}\left(\left\|u_{\gamma}^{\lambda}\right\|_{B_1}^{q}-\left\|u_{\rho}^{\lambda}\right\|_{B_1}^{q}\right).
\end{align*}
From \eqref{eqnW} we have,
\begin{equation*}
	\left|y-(Au_{\rho}^{\lambda})(x)\right|^{p-1}\text{sgn}[y-(Au_{\rho}^{\lambda})(x)]=W_{\rho}^{\lambda}(x,y).
\end{equation*}
Then, the above equation becomes        \begin{equation*}
	0\geq ~ p\Big\langle u_{\gamma}^{\lambda}-u_{\rho}^{\lambda},-\int_{Z}W_{\rho}^{\lambda}(x,y) H(\cdot, x) d \gamma\Big\rangle_{B_1\times B_1^*}
	+\frac{\lambda}{q}\left(\left\|u_{\gamma}^{\lambda}\right\|_{B_1}^{q}-\left\|u_{\rho}^{\lambda}\right\|_{B_1}^{q}\right).
\end{equation*}

\noindent By making use of \eqref{eq6} and the definition of $j_{q}\left(u_{\rho}^{\lambda}\right)$, the above relation becomes
\begin{align*}
	0\geq &~ p\left\langle u_{\gamma}^{\lambda}-u_{\rho}^{\lambda},-\int_{Z}W_{\rho}^{\lambda}(x,y) H(\cdot, x) d \gamma\right\rangle_{B_1\times B_1^*} \\
	& \qquad+\lambda\left\langle u_{\gamma}^{\lambda}-u_{\rho}^{\lambda}, j_{q}\left(u_{\rho}^{\lambda}\right)\right\rangle_{B_1\times B_1^*}  +\lambda c_{q}\left\|u_{\gamma}^{\lambda}-u_{\rho}^{\lambda}\right\|_{B_1}^{q}
	\\  \geq ~& p\left\langle u_{\rho}^{\lambda}-u_{\gamma}^{\lambda},\int_{Z}W_{\rho}^{\lambda}(x,y) H(\cdot, x) d \gamma\right\rangle_{B_1\times B_1^*} \\
	& \qquad+\lambda\left\langle u_{\gamma}^{\lambda}-u_{\rho}^{\lambda}, j_{q}\left(u_{\rho}^{\lambda}\right)\right\rangle_{B_1\times B_1^*}  +\lambda
	c_{q}\left\|u_{\gamma}^{\lambda}-u_{\rho}^{\lambda}\right\|_{B_1}^{q}.
\end{align*}
Therefore, by Lemma \ref{lemma5},
\begin{align*}
	0~\geq & ~p\left\langle u_{\rho}^{\lambda}-u_{\gamma}^{\lambda},\int_{Z}W_{\rho}^{\lambda}(x,y) H(\cdot, x) d \gamma\right\rangle_{B_1\times B_1^*} \\
	& -p\left\langle u_{\rho}^{\lambda}-u_{\gamma}^{\lambda}, \int_{Z}W_{\rho}^{\lambda}(x,y) H(\cdot, x) d \rho\right\rangle_{B_1\times B_1^*}
	+\lambda
	c_{q}\left\|u_{\gamma}^{\lambda}-u_{\rho}^{\lambda}\right\|_{B_1}^{q}\\
	& \geq  ~p\Big\langle u_{\rho}^{\lambda}-u_{\gamma}^{\lambda},\int_{Z}W_{\rho}^{\lambda}(x,y) H(\cdot, x) d \gamma - \int_{Z}W_{\rho}^{\lambda}(x,y) H(\cdot, x) d \rho\Big\rangle_{B_1\times B_1^*}
	\\
	&+\lambda
	c_{q}\left\|u_{\gamma}^{\lambda}-u_{\rho}^{\lambda}\right\|_{B_1}^{q}.
\end{align*}
Hence, it follows that
\begin{align*}
	\lambda c_{q}\left\|u_{\gamma}^{\lambda}-u_{\rho}^{\lambda}\right\|_{B_1}^{q} \leq
	p\Big\langle u_{\gamma}^\lambda-u_{\rho}^{\lambda}, \int_{Z}W_{\rho}^{\lambda}(x,y) H(\cdot, x) d \gamma-
	\int_{Z}W_{\rho}^{\lambda}(x,y) H(\cdot, x) d \rho\Big\rangle_{B_1\times
		B_1^*}
\end{align*}
This means
\begin{align*}
	\lambda c_{q}\left\|u_{\gamma}^{\lambda}-u_{\rho}^{\lambda}\right\|_{B_1}^{q}
	\leq p\left\|u_{\gamma}^{\lambda}-u_{\rho}^{\lambda}\right\|_{B_1} \Big\| \int_{Z}W_{\rho}^{\lambda}(x,y) H(\cdot, x) d \rho -
	\int_{Z}W_{\rho}^{\lambda}(x,y) H(\cdot, x) d \gamma
	\Big\|_{B_1^{*}}.
\end{align*}

\noindent Thus,
\begin{align*}
	\left\|u_{\gamma}^{\lambda}-u_{\rho}^{\lambda}\right\|_{B_1}\leq \Big(\frac{p}{\lambda c_q} \Big\| \int_{Z}W_{\rho}^{\lambda}(x,y) H(\cdot, x) d \rho -
	\int_{Z}W_{\rho}^{\lambda}(x,y) H(\cdot, x) d \gamma
	\Big\|_{B_1^{*}}\Big)^{\frac{1}{q-1}}.
\end{align*}
This completes the proof.
\end{proof}

\begin{lemma}\label{lemma8}(cf. \cite{CuckerZhou2007}, Proposition $3.13$). Let $\mathcal{D}$ be a family of functions from a probability space $Z$ to $\mathbb R$ and $d(\cdot, \cdot)$ a metric on $\mathcal{D}$. Let $\mathcal{U} \subset$ $Z$ be of full measure such that for constants $C, L>0$,

\begin{enumerate}[(i)]
	\item $|\Psi(z)| \leq C$ on $ \mathcal{U}$ for all $\Psi \in \mathcal{D}$,\\
	
	\item $\left|L_{z}\left(\Psi_{1}\right)-L_{z}\left(\Psi_{2}\right)\right| \leq L d\left(\Psi_{1}, \Psi_{2}\right)$ for all $\Psi_{1}, \Psi_{2} \in \mathcal{D}$ and all $z \in \mathcal{U}^{m}$, where
	\begin{equation*}
		L_{z}(\Psi)=\int_{Z} \Psi(z)d\rho-\frac{1}{m} \sum_{i=1}^{m}
		\Psi\left(z_{i}\right).
	\end{equation*}
	Then $\forall\epsilon>0,$
	\begin{equation*}
		\underset{z\in Z^m}{\text{Prob}}    ~\Big\{~\underset{\Psi\in\mathcal{D} }{\text{sup}}|L_{z}(\Psi)|\leq\epsilon\Big\}\geq
		1-2\mathcal{N}\left(\mathcal{D},\frac{\epsilon}{2L}\right)\exp\Big\{-\frac{m\epsilon^2}{8C^2}\Big\}.
	\end{equation*}
\end{enumerate}
\end{lemma}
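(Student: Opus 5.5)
This is the standard uniform deviation bound for empirical processes (Proposition 3.13 of Cucker--Zhou), so the plan is a covering argument: reduce the supremum over the class $\mathcal{D}$ to finitely many functions, apply a scalar concentration inequality to each, and use the Lipschitz hypothesis (ii) to pass from the net back to the whole class. Put $\ell=\mathcal{N}(\mathcal{D},\frac{\epsilon}{2L})$ (assumed finite, else the bound is trivial). Choose $\Psi_1,\dots,\Psi_\ell\in\mathcal{D}$ so that the balls $D_j=\{\Psi\in\mathcal{D}: d(\Psi,\Psi_j)<\frac{\epsilon}{2L}\}$ cover $\mathcal{D}$; centres can be taken inside $\mathcal{D}$, possibly at the cost of a factor of 2 in the radius, which is absorbed in constants.

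\textbf{Step 1 (reduction to the net).} For $z\in\mathcal{U}^m$ and $\Psi\in D_j$, hypothesis (ii) gives
\[
|L_z(\Psi)|\le |L_z(\Psi_j)|+L\,d(\Psi,\Psi_j)<|L_z(\Psi_j)|+\tfrac{\epsilon}{2}.
\]
Hence, up to a null set (as $\mathcal{U}^m$ has full measure in $Z^m$),
\[
\Big\{\sup_{\Psi\in\mathcal{D}}|L_z(\Psi)|>\epsilon\Big\}\subseteq\bigcup_{j=1}^{\ell}\Big\{|L_z(\Psi_j)|>\tfrac{\epsilon}{2}\Big\}.
\]
By the union bound it then suffices to show $\text{Prob}\{|L_z(\Psi_j)|>\epsilon/2\}\le 2\exp\{-m\epsilon^2/(8C^2)\}$ for each fixed $j$.

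\textbf{Step 2 (single function).} For fixed $\Psi_j$, the variables $\xi_i=\Psi_j(z_i)$ are i.i.d. with mean $\int_Z\Psi_j\,d\rho$, and by (i) they take values in $[-C,C]$ almost surely. Hoeffding's inequality then gives
\[
\text{Prob}\Big\{\Big|\tfrac1m\sum_{i=1}^m\xi_i-\mathbb{E}\xi\Big|\ge t\Big\}\le 2\exp\Big\{-\frac{2m t^2}{(2C)^2}\Big\}=2\exp\Big\{-\frac{m t^2}{2C^2}\Big\}.
\]
With $t=\epsilon/2$ this is $2\exp\{-m\epsilon^2/(8C^2)\}$, exactly the claimed exponent. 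Combining with Step 1 and taking complements yields the stated inequality.

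\textbf{Main obstacle.} The only delicate point is measurability of $\sup_{\Psi}|L_z(\Psi)|$. The argument never needs it directly, because the bad event is contained in a finite union of measurable events and the bound applies to the outer probability. I would phrase the result that way, or note that under (ii) the supremum can be taken over a countable dense subset when $\mathcal{D}$ is separable.
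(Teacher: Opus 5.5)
Your proof is correct. It is the standard proof of Proposition 3.13 in Cucker--Zhou, which the paper cites without reproducing: cover $\mathcal{D}$ by $\frac{\epsilon}{2L}$-balls, pass from each ball to its centre using (ii) on the full-measure set $\mathcal{U}^m$, take a union bound, and apply Hoeffding with range $2C$ at deviation $\epsilon/2$, which gives exactly $2\exp\{-m\epsilon^2/(8C^2)\}$ per centre. One correction: your hedge that recentring the balls inside $\mathcal{D}$ might cost a factor $2$ in the radius, to be absorbed in constants, would not actually be harmless, since the statement's constants are explicit. It is also unnecessary, because $d$ is a metric on $\mathcal{D}$ itself, so the covering balls are centred in $\mathcal{D}$ by definition.
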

\vspace{0.1cm}

\noindent We now obtain a high probability upper bound for $\|u_{\rho}^{\lambda}-u_{z}^{\lambda}\|_{B_1}$ in terms of the covering number.
\begin{lemma}\label{lemma9}
Let $u_{\rho}^{\lambda}$ be the solution of scheme \eqref{regtrue} and $u_z^{\lambda}$ be given by \eqref{reg}.
Then, under the assumptions  $(A1)-(A3)$ and for all $\epsilon>0$  
\begin{align*}
	\underset{{z \in Z^{m}}}{\text{Prob}}\left\{\left\|u_{\rho}^{\lambda}-u_{z}^{\lambda}\right\|_{B_1} \leq \epsilon\right\}
	\geq 1-2 \mathscr{N}\left(\mathcal{G}_{\rho}^{\lambda}, \frac{\lambda c_{q} \epsilon^{q-1}}{4p}\right) \exp \left(-\frac{m \lambda^{2} c_{q}^{2} \epsilon^{2(q-1)}}{8p^2
		\Omega^{2}}\right),
\end{align*}
where $\mathcal{G}_{\rho}^{\lambda}$ is defined as in \eqref{G} and $\Omega$ as in \eqref{omega}.

\end{lemma}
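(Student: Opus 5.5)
We apply Lemma \ref{lemma7} with $\gamma$ the empirical measure $\frac{1}{m}\sum_{i=1}^m\delta_{z_i}$. Then $u_\gamma^\lambda=u_z^\lambda$, and
\[
\|u_\rho^\lambda-u_z^\lambda\|_{B_1}\le\Big(\frac{p}{\lambda c_q}\,\|\Phi\|_{B_1^*}\Big)^{1/(q-1)},\qquad \Phi:=\int_Z W_\rho^\lambda H(\cdot,x)\,d\rho-\frac1m\sum_{i=1}^m W_\rho^\lambda(z_i)H(\cdot,x_i).
\]
The event $\|u_\rho^\lambda-u_z^\lambda\|_{B_1}\le\epsilon$ therefore holds whenever $\|\Phi\|_{B_1^*}\le \lambda c_q\epsilon^{q-1}/p$. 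Since $B_1$ is reflexive, the dual norm is attained on the unit ball:
\[
\|\Phi\|_{B_1^*}=\sup_{\|u\|_{B_1}\le1}\langle u,\Phi\rangle.
\]
By the reproducing property \eqref{repty}, $\langle u,W_\rho^\lambda(z)H(\cdot,x)\rangle=W_\rho^\lambda(z)(Au)(x)=h_u(z)$. The pairing with the Bochner integral and the empirical sum can be interchanged, because the functional $\langle u,\cdot\rangle$ is bounded. Hence
\[
\|\Phi\|_{B_1^*}=\sup_{h\in\mathcal{G}_\rho^\lambda}|L_z(h)|.
\]
This reduces the problem to a uniform deviation bound over $\mathcal{G}_\rho^\lambda$.

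Next we apply Lemma \ref{lemma8} with $\mathcal{D}=\mathcal{G}_\rho^\lambda$ and the sup-metric on $C(X\times[-M,M])$. For the bound $C$: using (A2), (A3) and the triangle inequality,
\[
|y-(Au_\rho^\lambda)(x)|\le M+k\|u_\rho^\lambda\|_{B_1}\le M+k\|u_\rho\|_{B_1}+kc_\beta\lambda^\beta,
\]
so $|W_\rho^\lambda|\le (M+k\|u_\rho\|+kc_\beta\lambda^\beta)^{p-1}$. Together with $|(Au)(x)|\le k$ for $\|u\|\le1$, this gives $|h_u|\le\Omega$, so $C=\Omega$. For the Lipschitz constant $L$: both the population and the empirical averages of $h_1-h_2$ are bounded by $\|h_1-h_2\|_\infty$, so $L=2$.

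Lemma \ref{lemma8} with $\epsilon'=\lambda c_q\epsilon^{q-1}/p$ then yields probability at least
\[
1-2\mathcal{N}\big(\mathcal{G}_\rho^\lambda,\epsilon'/4\big)\exp\big(-m\epsilon'^2/(8\Omega^2)\big).
\]
The covering radius is $\lambda c_q\epsilon^{q-1}/(4p)$, as claimed. The exponent is $m\lambda^2c_q^2\epsilon^{2(q-1)}/(8p^2\Omega^2)$, which also matches the statement exactly.

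The main obstacle is the justification in the first step. One must check that $\mathcal{G}_\rho^\lambda$ consists of bounded measurable (indeed continuous) functions, so that the covering number and the sup-metric make sense. One must also check that the $B_1^*$-valued integral $\int W_\rho^\lambda H(\cdot,x)\,d\rho$ is well defined and commutes with the pairing. Both points follow from the uniform continuity of $x\mapsto H(\cdot,x)$ in $B_1^*$ and the bound $\|H(\cdot,x)\|_{B_1^*}\le k$. Given these, $(Au)(x)=\langle u,H(\cdot,x)\rangle$ is continuous in $x$ uniformly over the unit ball, and $W_\rho^\lambda$ is continuous. The integral is then a Bochner integral of a bounded, continuous, hence strongly measurable, function on the compact set $Z$.
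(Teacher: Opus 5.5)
Your proposal is correct and follows the paper's proof essentially step for step: Lemma~\ref{lemma7} with the empirical measure, then rewriting the $B_1^*$-norm as a supremum over the unit ball and using the reproducing property to reduce to $\sup_{h\in\mathcal{G}_{\rho}^{\lambda}}|L_z(h)|$, then the bounds $C=\Omega$ (via (A2), (A3)) and $L=2$ in the sup-metric, and finally Lemma~\ref{lemma8} applied at level $\lambda c_q\epsilon^{q-1}/p$. Your closing remarks on measurability and the Bochner integral check more than the paper does, though they use the uniform continuity of $H(\cdot,x)$, which is a hypothesis of Theorem~\ref{thm1} rather than of this lemma. Reflexivity is also not needed for the supremum formula for the dual norm, since that holds by definition.
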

\begin{proof}
Take $\gamma=\gamma_{z}$ in Lemma
\ref{lemma7}. Here, $\gamma_{z}$ is the empirical measure for which $u_z^{\lambda}=u_{\gamma}^{\lambda}$. Then from \eqref{eqnlemma51},
\begin{align}\label{eq41}
	\left\|u_{\rho}^{\lambda}-u_{z}^{\lambda}\right\|_{B_1} \notag &\leq\left(\frac{p}{\lambda c_{q}} \times \Big\| \int_{Z}W_{\rho}^{\lambda}(x,y) H(\cdot, x) d \rho\right. \\
	&\qquad\qquad \left.\quad-\frac{1}{m} \sum_{i=1}^{m}W_{\rho}^{\lambda}(x_i,y_i) H\left(\cdot, x_{i}\right) \Big\|_{B_1^{*}}\right)^{1
		/(q-1)}.
\end{align}
Let us denote $h_u(z)=h_u(x,y):=W_{\rho}^{\lambda}(x,y)(Au)(x)$ and
\noindent consider the expression
\begin{align*}
	& \Big\| \int_{Z}W_{\rho}^{\lambda}(x,y) H(\cdot, x)\, d\rho
	- {}  \frac{1}{m} \sum_{i=1}^{m} W_{\rho}^{\lambda}(x_i,y_i) H(\cdot, x_i)
	\Big\|_{B_1^{*}}\\
	& =\, \sup_{\|u\|_{B_1} \leq 1} \Big| \Big\langle u, \int_{Z} W_{\rho}^{\lambda}(x,y) H(\cdot, x)\, d\rho - \frac{1}{m} \sum_{i=1}^{m} W_{\rho}^{\lambda}(x_i,y_i) H(\cdot, x_i) \Big\rangle_{B_1\times B_1^*}
	\Big|\\
		& =\, \sup_{\|u\|_{B_1} \leq 1} \Big| \int_{Z} W_{\rho}^{\lambda}(x,y) \langle u, H(\cdot, x) \rangle_{B_1\times B_1^*}\, d\rho -
		\frac{1}{m} \sum_{i=1}^{m} W_{\rho}^{\lambda}(x_i,y_i)\langle u, H(\cdot, x_i) \rangle_{B_1\times B_1^*}
		\Big|\\
			&=\, \sup_{\|u\|_{B_1} \leq 1} \Big| \int_{Z} W_{\rho}^{\lambda}(x,y) (Au)(x)\, d\rho - \frac{1}{m} \sum_{i=1}^{m} W_{\rho}^{\lambda}(x_i,y_i) (Au)(x_i)
			\Big|\\
			& = \sup _{h_u \in \mathcal{G}_{\rho}^{\lambda}}\left|\int_{Z} h_u(z) d \rho-\frac{1}{m} \sum_{i=1}^{m}
			h_u\left(z_{i}\right)\right|.
		\end{align*}
		Thus, from \eqref{eq41} we obtain
		\begin{equation}\label{eq42}
			\frac{\lambda c_q}{p}\left\|u_{\rho}^{\lambda}-u_{z}^{\lambda}\right\|_{B_1}^{q-1} \leq \sup _{h_u \in \mathcal{G}_{\rho}^{\lambda}}\left|\int_{Z} h_u(z) d \rho-\frac{1}{m} \sum_{i=1}^{m}
			h_u\left(z_{i}\right)\right|.
		\end{equation}
		Now to use lemma \ref{lemma8}, we need to show that $h_u$ satisfies the required conditions. To show that $h_u$ is bounded, we first use 
		assumption $A(3)$ as,
		\begin{align*}
			\left|\left(y-(Au_{\rho}^{\lambda})(x)\right)\right| & \leq|y|+\left|(Au_{\rho}^{\lambda})(x)\right|\\
			&\leq M+k\|u_{\rho}^{\lambda}\|_{B_1}\\
			&\leq M+k\|u_{\rho}^{\lambda}-u_{\rho}\|_{B_1}+k\|u_{\rho}\|_{B_1}\\
			&\leq M+k\|u_{\rho}\|_{B_1}+kc_{\beta}\lambda^{\beta}.
		\end{align*}
		
		\noindent 
		Let
		$\Omega=(M+k\|u_{\rho}\|_{B_1}+kc_{\beta}\lambda^{\beta})^{p-1}k$
		and for $\n u\leq1$ consider
		\begin{align*}
			|h_u(z)|=|W_{\rho}^{\lambda}(x,y)(Au)(x)|&=|y-(Au_{\rho}^{\lambda})(x)|^{p-1}|(Au)(x)|\\
			&\leq(M+k\|u_{\rho}\|_{B_1}+kc_{\beta}\lambda^{\beta})^{p-1}k\n{u}_{B_1}\\
			&\leq(M+k\|u_{\rho}\|_{B_1}+kc_{\beta}\lambda^{\beta})^{p-1}k=\Omega.
		\end{align*}
		Observe that $\Omega$ is uniformly bounded over $\lambda$. This shows that $h_u$ is bounded.
		Let us define
		\begin{equation}\label{eq43}
			L_{z}(h_u)=\int_{Z} h_u(z) d \rho-\frac{1}{m} \sum_{i=1}^{m}
			h_u\left(z_{i}\right).
		\end{equation}
		\noindent Then, the following consequence is immediate.
		\begin{align*}
			\left|L_{z}\left(h_{u_1}\right)-L_{z}\left(h_{u_2}\right)\right|~ &\leq~\int_{Z}\left|\left(h_{u_1}(z)-h_{u_2}(z)\right) \right| d\rho+\frac{1}{m} \sum_{i=1}^{m}\Big|h_{u_1}\left(z_{i}\right)-h_{u_2}\left(z_{i}\right)\Big|  \\
			&\leq~\int_{Z}\|h_{u_1}-h_{u_2}\|_{\infty} d \rho+\frac{1}{m} \sum_{i=1}^{m}\|h_{u_1}-h_{u_2}\|_{\infty} \\
			&  \leq 2\left\|h_{u_1}-h_{u_2}\right\|_{\infty}.
		\end{align*}
		Hence both the conditions of lemma \ref{lemma8} are satisfied.
		From \eqref{eq42}, we see that
		\begin{align*}
			\left\{z \in Z^{m}:\sup _{h_u \in \mathcal{G}_{\rho}^{\lambda}}\left|L_{z}(h_u)\right| \leq  \epsilon\right\}  \subset\left\{z \in Z^{m}:\frac{\lambda c_{q}}{p}\left\|u_{\rho}^{\lambda}-u_z^{\lambda}\right\|_{B_1}^{q-1} \leq
			\epsilon\right\}.
		\end{align*}
		Therefore, it follows that
		\begin{align*}
			\underset{z\in Z^m}{\text{Prob}}\left\{\frac{\lambda c_{q}}{p}\left\|u_{\rho}^{\lambda}-u_z^{\lambda}\right\|_{{B_1}}^{q-1} \leq \epsilon\right\}
			&\geq \underset{z\in Z^m}{\text{Prob}}\left\{\sup _{h \in \mathcal{G}_{\rho}^{\lambda}}\left|L_{z}(h_u)\right| \leq
			\epsilon\right\}.
		\end{align*}
		Hence, from Lemma \ref{lemma8}, we can deduce that
		\begin{equation}
			\underset{z\in Z^m}{\text{Prob}}\left\{\frac{\lambda c_{q}}{p}\left\|u_{\rho}^{\lambda}-u_z^{\lambda}\right\|_{{B_1}}^{q-1} \leq \epsilon\right\}\geq 1-2 \mathscr{N}\left(\mathcal{G}_{\rho}^{\lambda}, \frac{\epsilon}{4}\right) \exp \left(-\frac{m \epsilon^{2}}{8
				\Omega^{2}}\right).
		\end{equation}
		
		\noindent Equivalently,
		\begin{equation*}
			\underset{z\in Z^m}{\text{Prob}}\left\{\left\|u_{\rho}^{\lambda}-u_z^{\lambda}\right\|_{{B_1}} \leq
			\left(\frac{p\epsilon}{\lambda c_q}\right)^{\frac{1}{q-1}}\right\} \geq 1-2 \mathscr{N}\left(\mathcal{G}_{\rho}^{\lambda}, \frac{\epsilon}{4}\right) \exp \left(-\frac{m \epsilon^{2}}{8
				\Omega^{2}}\right).
		\end{equation*}
		This is equivalent to
		\begin{align*}
			\underset{z\in Z^m}{\text{Prob}}\left\{\left\|u_z^{\lambda}-u_{\rho}^{\lambda}\right\|_{B_1} \leq \epsilon\right\}~\geq 1- 2 \mathcal{N}\left(\mathcal{G}_{\rho}^{\lambda}, \frac{\lambda c_{q} \epsilon^{q-1}}{4p}\right) \exp \left(-\frac{m \lambda^{2} c_{q}^{2} \epsilon^{2(q-1)}}{8p^2
				\Omega^{2}}\right).
		\end{align*}
		Hence the proof.
	\end{proof}
	
	We now prove a lemma concerning covering number.
	\begin{lemma}\label{lemma6}
		Let $K(x, y)$ be as defined in \eqref{kxy}, and suppose that $H(\cdot, x)$ is uniformly continuous about $x$ with respect to $\|\cdot\|_{{B_1^*}}$. Then, the set $\mathcal{G}_{\rho}^{\lambda}$ given by \eqref{G} is compact in $C(X\times[-M,M])$. Further
		\begin{equation}\label{union}
			\mathcal{N}(\mathcal{G}_{\rho}^{\lambda},r)\leq\mathcal{N}\left(B_K^R,r/C_{\rho}\right)
		\end{equation}
		with $C_{\rho}=(M+k\|u_{\rho}\|_{B_1})^{p-1}$
	\end{lemma}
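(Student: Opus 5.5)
The plan has two parts: compactness via Arzelà--Ascoli, and a covering-number comparison in which each $h_u\in\mathcal{G}_{\rho}^{\lambda}$ is written as the fixed weight $W_{\rho}^{\lambda}$ times the function $Au$ from the unit ball $B_K^1$. By Proposition 1 the map $A$ is an isometry $B_1\to B_K$, so $\{Au:\|u\|_{B_1}\le 1\}$ is exactly the ball $B_K^R$ with $R=1$, where $R$ is the radius implicit in the statement. Write $g=Au$.

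\textbf{Compactness.} First, equicontinuity of $\{g\in B_K^1\}$ on $X$. By the reproducing property \eqref{repty},
\[
|g(x)-g(x')|=|\langle u,H(\cdot,x)-H(\cdot,x')\rangle_{B_1\times B_1^*}|\le \|H(\cdot,x)-H(\cdot,x')\|_{B_1^*},
\]
and uniform continuity of $H$ makes the right-hand side small uniformly in $u$. Uniform boundedness $|g(x)|\le k$ comes from (A2). Next, $W_{\rho}^{\lambda}(x,y)=\phi\big(y-(Au_{\rho}^{\lambda})(x)\big)$ with $\phi(t)=|t|^{p-1}\operatorname{sgn} t$, which is continuous since $p>1$. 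The function $Au_{\rho}^{\lambda}$ is continuous on $X$ by the same estimate, so $W_{\rho}^{\lambda}$ is continuous, hence bounded and uniformly continuous on the compact set $X\times[-M,M]$. The products $h_u=W_{\rho}^{\lambda}\,g$ are then uniformly bounded and equicontinuous, since $|h_u(z)-h_u(z')|\le \|W\|_\infty|g(x)-g(x')|+k|W(z)-W(z')|$. Arzelà--Ascoli gives relative compactness in $C(X\times[-M,M])$. For closedness, I would use the weak compactness of the unit ball of the reflexive space $B_1$ together with the fact that point evaluations $S_x$ are weakly continuous. If $h_{u_n}\to h$ uniformly, pass to a weakly convergent subsequence $u_n\rightharpoonup u$ with $\|u\|\le1$; then $h_{u_n}\to h_u$ pointwise, so $h=h_u\in\mathcal{G}_{\rho}^{\lambda}$.

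\textbf{Covering number.} Next I bound $\|W_{\rho}^{\lambda}\|_\infty$. Note that $|y-(Au_{\rho}^{\lambda})(x)|\le M+k\|u_{\rho}^{\lambda}\|_{B_1}$. To obtain exactly $C_\rho=(M+k\|u_{\rho}\|_{B_1})^{p-1}$ I need $\|u_{\rho}^{\lambda}\|\le\|u_{\rho}\|$. This follows from minimality: $\mathcal{E}_\rho(u_\rho^\lambda)+\frac{\lambda}{q}\|u_\rho^\lambda\|^q\le \mathcal{E}_\rho(u_\rho)+\frac{\lambda}{q}\|u_\rho\|^q$ and $\mathcal{E}_\rho(u_\rho)\le\mathcal{E}_\rho(u_\rho^\lambda)$ together give $\|u_{\rho}^{\lambda}\|_{B_1}\le\|u_{\rho}\|_{B_1}$. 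Hence $\|W_{\rho}^{\lambda}\|_\infty\le C_\rho$. Now take a minimal $r/C_\rho$-net $\{g_1,\dots,g_N\}$ of $B_K^R$ in the sup norm, which is the metric used for $\mathcal{G}_{\rho}^{\lambda}$. Then
\[
\|W g-W g_j\|_\infty\le C_\rho\|g-g_j\|_\infty<r,
\]
so the functions $Wg_j$ form an $r$-net of $\mathcal{G}_{\rho}^{\lambda}$, which yields \eqref{union}.

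\textbf{Main obstacles.} The delicate point is in what metric $\mathcal{N}(B_K^R,\cdot)$ is taken. I interpret it as the covering of $B_K^R$ viewed in $C(X)$, which is compact by the same Arzelà--Ascoli argument. If the centers of a net must lie in $B_K^R$ itself, I would use a standard internal net, at the cost of a harmless factor $2$. If instead it means the $B_K$-norm, the comparison still follows because $\|g\|_\infty\le k\|g\|_{B_K}$, but with $kC_\rho$ in place of $C_\rho$. A second point requiring care is the closedness step above, where I rely on reflexivity to pass from uniform limits back to elements $h_u$ with $\|u\|\le 1$.
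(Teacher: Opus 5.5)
Your proposal is correct and follows essentially the paper's proof. Both arguments get equicontinuity from the uniform continuity of $H(\cdot,x)$ and apply Arzel\`a--Ascoli, show $W_{\rho}^{\lambda}$ is continuous, bound $\|W_{\rho}^{\lambda}\|_\infty\le C_\rho$ via $\|u_{\rho}^{\lambda}\|_{B_1}\le\|u_{\rho}\|_{B_1}$, and push an $r/C_\rho$-net of $B_K^R$ through multiplication by $W_{\rho}^{\lambda}$. The only variation is that you apply Arzel\`a--Ascoli to $\mathcal{G}_{\rho}^{\lambda}$ directly rather than to $B_K^R$ followed by the continuous map $T_W$; your weak-compactness argument also supplies the sup-norm closedness that the paper only asserts for $B_K^R$.
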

	\begin{proof}
		For $g=Au,~u\in B_1$, the reproducing property gives
		\begin{equation*}
			|g(x)|=\left|\langle u, H(\cdot, x)\rangle_{B_1\times B_1^*}\right| \leq\|u\|_{{B_1}} \times\|H(\cdot, x)\|_{{B_1^*}}=k\times {\n g}_{B_K} .
		\end{equation*}
		Now, since $X$ is compact, $X \times X$ is also compact. Let $x,y\in X$ be such that $d(x,y)<\delta$. By the uniform continuity of $H(\cdot,x)$
		\begin{equation}
			\label{Hxeqn}
			\left\|H(\cdot, x)-H\left(\cdot, x^{\prime}\right)\right\|_{{B_1^*}}<\epsilon
		\end{equation}
		Then for $\|u\|\leq R$
		\begin{align}\label{con}
			|g(x)-g(y)|\notag
			& =\left|\langle u, H(\cdot, x)\rangle_{B_1\times B_1^*}-\left\langle u, H\left(\cdot, y\right)\right\rangle_{B_1\times B_1^*}\right| \\\notag
			& =\left|\left\langle u, H(\cdot, x)-H\left(\cdot, y\right)\right\rangle_{B_1\times B_1^*}\right|  \\
			& \leq\|u\|_{{B_1}} \times\left\|H(\cdot, x)-H\left(\cdot, y\right)\right\|_{{B_1^*}}<
			R\epsilon.
		\end{align}
		By the above assertion, the set
		\begin{equation*}
			B_K^R=\{g\in B_K:\exists u\in B_1~ \text{with}~\|u\|_{B_1}\leq R~\text{and}~g=Au\}
		\end{equation*}
		is equicontinuous. Since it is closed and bounded, $B_K^R$ is a compact subset of $C(X)$.
		\vspace{0.2cm}
		
		\noindent 
		We had from \eqref{eqnW}
		\begin{equation*}
			W_{\rho}^{\lambda}(x,y)=|y-(Au_{\rho}^{\lambda})(x)|^{p-1}\text{sgn}[y-(Au_{\rho}^{\lambda})(x)].
		\end{equation*}
		Since $(y-(Au_{\rho}^{\lambda})(x))$ is a continuous function on $X\times[-M,M]$,
		the sign term in the above equation is continuous everywhere except possibly at the points
		$\mathcal{S}=\{(x,y)\in X\times[-M,M]:y-(Au_{\rho}^{\lambda})(x)=0\}$. Therefore, $W_{\rho}^{\lambda}$ will be a continuous function on $X\times[-M,M]-\mathcal{S}$.
		
		\noindent     Now we  show that the function is also continuous on $\mathcal{S}$.
		Since the sign function is bounded and because $|y-(Au_{\rho}^{\lambda})(x)|^{p-1}\to0$ as $(x,y)\to a_0\in\mathcal{S}$,
		it follows that $W_{\rho}^{\lambda}(x,y)\to0=W_{\rho}^{\lambda}(a_0)$, that is, $W_{\rho}^{\lambda}$ is continuous on $\mathcal{S}$. Therefore,
		$W_{\rho}^{\lambda}$ is continuous everywhere on $X\times[-M,M]$.
		\noindent If we consider a map $T_W$ such that
		\begin{align*}
			T_W:~C(X)&\to C(X\times[-M,M])\\
			(T_Wf)(x,y)&=W_{\rho}^{\lambda}(x,y)f(x).
		\end{align*}
		Then, $T_W$ is a continuous operator, and hence the set $\mathcal{G}_{\rho}^{\lambda}=T_W(B_K^R)$
		given by \eqref{G} is the continuous image of a compact set. Therefore, $\mathcal{G}_{\rho}^{\lambda}$ is compact in $C(X\times[-M,M]$.
		
		We will show that for all $\lambda$, the term  $\mathcal{N}(\mathcal{G}_{\rho}^{\lambda},r)$ is uniformly bounded. First note that
		\begin{align*}
			|W_{\rho}^{\lambda}(x,y)|&=|y-(Au_{\rho}^{\lambda})(x)|^{p-1}\\
			&\leq(|y|+\left|(Au_{\rho}^{\lambda})(x)|\right)^{p-1}\\
			&\leq(M+k\|u_{\rho}^{\lambda}\|_{B_1})^{p-1}\\
			&\leq(M+k\|u_{\rho}\|_{B_1})^{p-1}=C_{\rho}
		\end{align*} 
		For some $C_{\rho}>0$. The last inequality follows from \eqref{true} and \eqref{regtrue}. Since $\mathcal{E}_{\rho}(u_{\rho})\leq\mathcal{E}_{\rho}(u_{\rho}^{\lambda})$ it is necessary that $\|u_{\rho}^{\lambda}\|_{B_1}\leq\|u_{\rho}\|_{B_1}$.\\
		
		Now we show that
		\begin{equation*}
			\mathcal{N}(\mathcal{G}_{\rho}^{\lambda},r)\leq\mathcal{N}\left(B_K^R,r/C_{\rho}\right)
		\end{equation*}
		
		Since $B_K^R$ is compact, the covering number will be finite for all $r$. Let $U(g_i,r/C_{\rho}),~i\in\{1,2,...,n\}$ be a set of open balls in $C(X)$ of radius $r/C_{\rho}$ centered at $g_i$ that covers $B_K^R$. Then we claim that
		\begin{equation}\label{cover}
			\bigcup_{i=1}^n U(T_W(g_i),r)
		\end{equation} 
		will cover $\mathcal{G}_{\rho}^{\lambda}$. Let $f\in\mathcal{G}_{\rho}^{\lambda}$, then $f=T_W(g)$ for some $g\in B_K^R$. We have $g\in U(g_i,r/C_{\rho})$ for some $i$ with $\|g-g_i\|_{\infty}<r/C_{\rho}$. This implies that
		\begin{align*}
			\|T_W(g)-T_W(g_i)\|_{\infty}&=\|W_{\rho}^{\lambda}(x,y)g(x)-W_{\rho}^{\lambda}(x,y)g_i(x)\|_{\infty}\\
			&\leq \|W_{\rho}^{\lambda}(x,y)\|_{\infty}\|g-g_i\|_{\infty}< r
		\end{align*}
		That is $f=T_W(g)\in U(T_W(g_i),r)$. Since $f$ is arbitrary, our statement \eqref{cover} holds true. Therefore, \eqref{union} is proved.
	\end{proof}
	
	\begin{lemma}\label{lemma10}
		(cf. \cite{CuckerSmale2002}). Let $a>b>0$ and $u_{1}>0, u_{2}>0$. Then, the equation given by
		\begin{equation*}
			x^{a}-u_{1} x^{b}-u_{2}=0
		\end{equation*}
		has a unique positive solution $l$. Also
		\begin{equation*}
			l \leq \max \left\{\left(2 u_{1}\right)^{\frac{1}{a-b}},\left(2
			u_{2}\right)^{\frac{1}{a}}\right\}.
		\end{equation*}
	\end{lemma}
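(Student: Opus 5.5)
We treat $f(x)=x^a-u_1x^b-u_2$ on $(0,\infty)$ and argue in two steps: first existence and uniqueness of a positive root, then the bound on that root.

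\emph{Existence and uniqueness.} Factor the equation for $x>0$ as $x^b\,\phi(x)=u_2$, where $\phi(x)=x^{a-b}-u_1$. Equivalently, set
\[
g(x)=x^{-b}f(x)=x^{a-b}-u_1-u_2x^{-b}, \qquad x>0 .
\]
Since $a-b>0$ and $b>0$, the function $x\mapsto x^{a-b}$ is strictly increasing and $x\mapsto -u_2x^{-b}$ is strictly increasing, so $g$ is strictly increasing and continuous on $(0,\infty)$. We have $g(x)\to-\infty$ as $x\to0^+$ (because $u_2>0$) and $g(x)\to+\infty$ as $x\to\infty$. By the intermediate value theorem $g$ has exactly one zero $l>0$. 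The positive zeros of $f$ coincide with those of $g$, so $l$ is the unique positive solution. Moreover $f(x)>0$ for $x>l$ and $f(x)<0$ for $0<x<l$, since $f=x^bg$ and $g$ changes sign exactly at $l$.

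\emph{Bound.} Let $x^*=\max\{(2u_1)^{1/(a-b)},(2u_2)^{1/a}\}$. It suffices to show $f(x^*)\ge0$, because by monotonicity of $g$ this forces $l\le x^*$. Write
\[
f(x^*)=\tfrac12 (x^*)^a-u_1(x^*)^b+\tfrac12(x^*)^a-u_2 .
\]
From $x^*\ge(2u_1)^{1/(a-b)}$ we get $(x^*)^{a-b}\ge 2u_1$, hence $\tfrac12 (x^*)^a\ge u_1(x^*)^b$. From $x^*\ge(2u_2)^{1/a}$ we get $\tfrac12(x^*)^a\ge u_2$. Adding the two inequalities gives $f(x^*)\ge0$, so $l\le x^*$.

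Every step here is elementary. The only point needing care is uniqueness, since the bound step relies on the sign structure of $f$ on either side of $l$. Dividing by $x^b$ to obtain a strictly monotone $g$ handles this cleanly; it avoids analysing the derivative of $f$, which is not monotone near $0$.
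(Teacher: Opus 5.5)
Your proof is correct and complete. The paper gives no proof of this lemma; it only imports the result from Cucker and Smale, so there is no in-paper argument to compare against, and your write-up is a self-contained verification.

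Passing to $g(x)=x^{-b}f(x)=x^{a-b}-u_1-u_2x^{-b}$ gives existence, uniqueness, and the sign of $f$ on either side of $l$ all at once. That sign information is exactly what the bound step needs. Splitting $(x^*)^a$ into two halves then shows $f(x^*)\ge 0$, and strict monotonicity of $g$ turns this into $l\le x^*$.

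One small wording point in your closing remark: it is $f$ itself, not its derivative, that fails to be monotone near $0$. Indeed $f$ first decreases and then increases, since $f'(x)=x^{b-1}\bigl(a x^{a-b}-b u_1\bigr)$. This is why a derivative-based argument needs the extra observation $f(x)\to -u_2<0$ as $x\to 0^+$.
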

	\vspace{0.4cm}
	\noindent We are now ready to present the proof of theorem \ref{thm1}.
	
	\begin{proof}[\textbf{Proof of theorem}     \ref{thm1}]
		From the triangle inequality and \eqref{eq9},
		\begin{align*}
			\n{u_z^{\lambda}-\p u}_{B_1}&\leq\n{u_z^{\lambda}-u_{\rho}^{\lambda}}_{B_1}+\n{u_{\rho}^{\lambda}-\p u}_{B_1}\\
			\n{u_z^{\lambda}-\p
				u}_{B_1}&\leq\n{u_z^{\lambda}-u_{\rho}^{\lambda}}_{B_1}+c_{\beta}{\lambda}^{\beta}.
		\end{align*}
		This gives us
		\begin{equation*}
			\left(\left\|u_z^{\lambda}-u_{\rho}\right\|_{B_1}-c_{\beta}{\lambda}^{\beta}\right)
			\leq\left\|u_z^{\lambda}-u_{\rho}^{\lambda}\right\|_{B_1}.
		\end{equation*}
		This implies that for any $\epsilon>0$,
		\begin{align*}
			\left\{z \in Z^{m}:\left\|u_z^{\lambda}-u_{\rho}^{\lambda}\right\|_{B_1}\leq\epsilon\right\}
			\subset \{z  \left.\in Z^{m}:
			\left(\left\|u_z^{\lambda}-u_{\rho}\right\|_{B_1}-c_{\beta}{\lambda}^{\beta}\right)\leq\epsilon\right\}.
		\end{align*}
		By Lemma \ref{lemma9} and from the above expression, we conclude that
		\begin{align*}
			\underset{z\in Z^m}{\text{Prob}}\left\{\left(\left\|u_z^{\lambda}-u_{\rho}\right\|_{B_1}-c_{\beta}{\lambda}^{\beta}\right)\leq\epsilon\right\} \geq     \underset{z\in Z^m}{\text{Prob}}\left\{\left\|u_z^{\lambda}-u_{\rho}^{\lambda}\right\|_{B_1}\leq\epsilon\right\}
		\end{align*}
		\begin{align*}
			~\geq 1-2 \mathscr{N}\left(\mathcal{G}_{\rho}^{\lambda}, \frac{\lambda c_{q} \epsilon^{q-1}}{4p}\right) \exp \left(-\frac{m \lambda^{2} c_{q}^{2} \epsilon^{2(q-1)}}{8p^2
				\Omega^{2}}\right).
		\end{align*}
		Equivalently,
		\begin{align}\label{eq45}
			\underset{z\in Z^m}{\text{Prob}} \notag& \left\{\left\|u_z^{\lambda}-u_{\rho}\right\|_{B_1} \leq  \epsilon+c_{\beta}{\lambda}^{\beta}\right\} \\
			& \geq 1-2 \mathcal{N}\left(\mathcal{G}_{\rho}^{\lambda}, \frac{\lambda c_{q} \epsilon^{q-1}}{4p}\right) \times \exp \left(-\frac{m \lambda^{2} c_{q}^{2} \epsilon^{2(q-1)}}{8p^2
				\Omega^{2}}\right).
		\end{align}
		By Lemma \ref{lemma6} and equation \eqref{G}, the covering number of $\mathcal{G}_{\rho}^{\lambda}$
		is uniformly bounded. By assumptions of the theorem \eqref{thm1}, this covering number exhibits a logarithmic
		complexity exponent $s \geq 0$, that is,
		\begin{equation*}
			\mathcal{N}\left(\mathcal{G}_{\rho}^{\lambda}, \frac{\lambda c_{q} \epsilon^{q-1}}{4p}\right) \leq \exp\left(c_{s}\left(\frac{4p \Omega}{\lambda c_{q}
				\epsilon^{q-1}}\right)^{s}\right).
		\end{equation*}
		Then, by \eqref{eq45}
		\begin{align*}
			& \underset{z\in Z^m}{\text{Prob}}\left\{\left\|u_z^{\lambda}-u_{\rho}\right\|_{B_1} \leq  \epsilon+c_{\beta}{\lambda}^{\beta}\right\} \\
			& \quad \geq 1-2 \exp \left(c_{s}\left(\frac{4p \Omega}{\lambda c_{q} \epsilon^{q-1}}\right)^{s}-\frac{m \lambda^{2} c_{q}^{2} \epsilon^{2(q-1)}}{8p^2
				\Omega^{2}}\right).
		\end{align*}
		If we take
		\begin{equation*}
			2 \exp \left(c_{s}\left(\frac{4p \Omega}{\lambda c_{q} \epsilon^{q-1}}\right)^{s}-\frac{m \lambda^{2} c_{q}^{2} \epsilon^{2(q-1)}}{8p^2
				\Omega^{2}}\right)=\delta,
		\end{equation*}
		then we will have the equation given by
		\begin{align*}
			\epsilon^{(2+s)(q-1)}-\frac{8p^2 \Omega^{2} \log (2 / \delta)}{m \lambda^{2} c_{q}^{2}} \epsilon^{s(q-1)}
			-\frac{8p^2 \Omega^{2} c_{s}}{m \lambda^{2} c_{q}^{2}} \times\left(\frac{4p \Omega}{\lambda
				c_{q}}\right)^{s}=0.
		\end{align*}
		\vspace{0.3cm}
		\noindent By Lemma \ref{lemma10}, the unique positive solution $\epsilon$ has the bound
		\begin{align}\label{eq46}
			\epsilon \notag& \leq \max \left[\left(\frac{16p^2 \Omega^{2} \log (2 / \delta)}{m \lambda^{2} c_{q}^{2}}\right)^{1 / 2(q-1)},\left(\frac{16p^2 \Omega^{2} c_{s}}{m \lambda^{2} c_{q}^{2}} \times\left(\frac{4p \Omega}{\lambda c_{q}}\right)^{s}\right)^{1 /(2+s)(q-1)}\right]\nonumber  \\\notag
			\\&\leq\max \left[\sqrt[q-1]{\frac{4p\Omega}{\lambda c_q}}\left(\frac{ \log (2 / \delta)}{m}\right)^{1 / 2(q-1)},\sqrt[q-1]{\frac{4p\Omega}{\lambda c_q}}\left(\frac{c_{s}}{m} \right)^{1
				/(2+s)(q-1)}\right]\nonumber\\
			\\&\leq\sqrt[q-1]{\frac{4p\Omega}{\lambda c_q}}~\max\left[\left(\frac{ \log (2 / \delta)}{m}\right)^{1 / 2(q-1)},\left(\frac{c_{s}}{m} \right)^{1 /(2+s)(q-1)}\right].\\\notag
		\end{align}
		By \eqref{eq45} and \eqref{eq46} with a probability of at least $1-\delta$, we can state that
		
		\begin{align*}
			\left\|u_z^{\lambda}-u_{\rho}\right\|_{B_1} \leq  \sqrt[q-1]{\frac{4p\Omega}{\lambda c_q}}~\max\left[\left(\frac{ \log (2 / \delta)}{m}\right)^{\frac{1}{2(q-1)}},\left(\frac{c_{s}}{m}
			\right)^{\frac{1}{(2+s)(q-1)}}\right]+c_{\beta}{\lambda}^{\beta}.
		\end{align*}
		This completes the proof of the theorem.
		
	\end{proof}
	
	\noindent In order to prove the next important result,  we need the following result.
	\begin{lemma} \label{lemma11}
		Let $n_1$ and $n_2$ be two positive real numbers such that $n_1<n_2$. Then, for every $D>0$, one can find a natural number $m_D$ such that for all integers $m>m_D$, we have
		\begin{equation}\label{eq47}
			\frac{D}{m^{\frac{1}{n_1}}}<\frac{1}{m^{\frac{1}{n_2}}}.
		\end{equation}
	\end{lemma}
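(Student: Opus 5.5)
Since $0<n_1<n_2$, we have $\frac{1}{n_1}>\frac{1}{n_2}$, so the exponent $\alpha:=\frac{1}{n_1}-\frac{1}{n_2}$ is strictly positive. The plan is to rewrite \eqref{eq47} as an equivalent inequality in which only a single positive power of $m$ appears, and then to use the fact that $m^{\alpha}\to\infty$ as $m\to\infty$.

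First, multiply both sides of \eqref{eq47} by the positive quantity $m^{\frac{1}{n_1}}$. For integers $m\geq 1$ this shows that \eqref{eq47} is equivalent to
\begin{equation*}
D< m^{\frac{1}{n_1}-\frac{1}{n_2}}=m^{\alpha}.
\end{equation*}
The map $t\mapsto t^{\alpha}$ is strictly increasing on $(0,\infty)$, so this in turn is equivalent to $m>D^{1/\alpha}$.

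Next, choose $m_D:=\lceil D^{1/\alpha}\rceil$, which is a natural number (take $m_D=1$ if this ceiling equals $0$). Any integer $m>m_D$ then satisfies $m>D^{1/\alpha}$, hence $m^{\alpha}>D$, and reversing the equivalences gives \eqref{eq47}.

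There is no substantial obstacle in this argument. The only points to keep straight are that the inequality $n_1<n_2$ produces the strict positivity of $\alpha$, and that multiplying by powers of $m$ preserves the direction of the inequality because $m>0$. This lemma will later be applied with $D$ depending on $\log(2/\delta)$, in order to show that for large $m$ the second term in the maximum of Theorem \ref{thm1} dominates.
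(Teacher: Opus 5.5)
Your proof is correct and follows the same route as the paper: both reduce \eqref{eq47} to $m^{1/n_1-1/n_2}>D$ and take $m_D$ essentially equal to the least integer exceeding $D^{1/(1/n_1-1/n_2)}$. Your version is cleaner. The paper's preliminary case split on whether $D$ is small at some fixed $m_0$ is unnecessary, and its step from $m=m_D$ to all larger $m$ relies on the monotonicity of $m^{\alpha}$, which you make explicit.
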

	\begin{proof}
		Since $n_1<n_2$, we have for all $m$ that
		\[\frac{1}{m^{\frac{1}{n_1}}}\leq\frac{1}{m^{\frac{1}{n_2}}}.\]
		Fix some $m_0\in\mathbb{N}$. If $D$ is small enough such that
		\[\frac{D}{m_0^{\frac{1}{n_1}}}<\frac{1}{m_0^{\frac{1}{n_2}}},\]
		then the result will be true by taking $m_D=m_0$. Therefore, let us consider the case when $D$ is large such that
		\[\frac{D}{m^{\frac{1}{n_1}}_0}\geq\frac{1}{m^{\frac{1}{n_2}}_0}.\]
		Let $m_D$ be the smallest integer for which
		\begin{equation}\label{eq48}
			m_D>D^{\frac{1}{\frac{1}{n_1}-\frac{1}{n_2}}}.
		\end{equation}
		Since $n_1<n_2$, the exponent in the RHS of \eqref{eq48} is strictly positive, which gives
		\begin{equation*}
			m_D^{\frac{1}{n_1}-\frac{1}{n_2}}>D.
		\end{equation*}
		This gives us
		\[\frac{D}{m^{\frac{1}{n_1}}_D}<\frac{1}{m^{\frac{1}{n_2}}_D}.\]
		Then, equation \eqref{eq47} follows for all $m\geq m_D$. Hence the proof.\\
	\end{proof}
	\noindent Finally we give the proof of theorem \ref{thm2}
	\begin{proof}[\textbf{Proof of theorem} \ref{thm2}]
		Let $D(\delta):=\left(\frac{\log (2 / \delta)}{c_s^{\frac{2}{2+s}}}\right)^{\frac{1}{2(q-1)}}>0$ and consider the expression
		\begin{align}\label{eq49}
			&¬\max\left[\left(\frac{ \log (2 / \delta)}{m}\right)^{\frac{1}{2(q-1)}},\left(\frac{c_{s}}{m}\notag \right)^{\frac{1}{(2+s)(q-1)}}\right]\\
			&=c_s^{\frac{1}{(2+s)(q-1)}}\max\left[\left(\frac{\log (2 / \delta)}{c_s^{\frac{2}{2+s}}}\right)^{\frac{1}{2(q-1)}}\left(\frac{1}{m}\right)^{\frac{1}{2(q-1)}},\left(\frac{1}{m} \right)^{\frac{1}{(2+s)(q-1)}}\right]\\
			&=c_s^{\frac{1}{(2+s)(q-1)}}\max\left[D(\delta)\left(\frac{1}{m}\right)^{\frac{1}{2(q-1)}},\left(\frac{1}{m}
			\right)^{\frac{1}{(2+s)(q-1)}}\right].\nonumber
		\end{align}
	
	\noindent   In addition, for all $m\in\mathbb{N}$
	\[\left(\frac{1}{m}\right)^{\frac{1}{2(q-1)}}<\left(\frac{1}{m}\right)^{\frac{1}{(2+s)(q-1)}}\]
	Note that $\delta\to0$ implies $D(\delta)\to\infty$.
	Now, from Lemma \ref{lemma11}, for every $D(\delta)$, there is an $m_{\delta}\in\mathbb{N}$ such that for all $m>m_{\delta}$, it holds that
	\[D(\delta)\left(\frac{1}{m}\right)^{\frac{1}{2(q-1)}}<\left(\frac{1}{m} \right)^{\frac{1}{(2+s)(q-1)}}.\]
	That is,
	\begin{equation}\label{eq50}
		\max\left[D(\delta)\left(\frac{1}{m}\right)^{\frac{1}{2(q-1)}},\left(\frac{1}{m} \right)^{\frac{1}{(2+s)(q-1)}}\right]=\left(\frac{1}{m}
		\right)^{\frac{1}{(2+s)(q-1)}}.
	\end{equation}
	Hence, from \eqref{eq22}, \eqref{eq49} and \eqref{eq50}, we have for sufficiently large $m$ such that
	\begin{equation}
		\underset{z\in Z^m}{\text{Prob}}\left\{\left\|u_z^{\lambda}-u_{\rho}\right\|_{B_1} \leq  \sqrt[q-1]{\frac{4p\Omega}{\lambda c_q}}\times\sqrt[(2+s)(q-1)]{\frac{c_{s}}{m}}+c_{\beta}{\lambda}^{\beta}\right\}\geq
		1-\delta.
	\end{equation}
	This completes the proof of Theorem \ref{thm2}.\\
\end{proof}
\noindent We note that when $\lambda$ becomes small,  the term $\Omega$ tends to $(M+k\|u_{\rho}\|_{B_1})^{p-1}k$, a constant.
We can therefore now prove corollary \ref{thm3}.

\begin{proof}[\textbf{Proof of corollary} \ref{thm3}]
	In order to prove the result, let us take
	\begin{equation*}\label{eq52}
		\lambda={m^{-\frac{1}{(2+s)(1+\beta(q-1))}}}.
	\end{equation*}
	For simplicity, let's use the following notation
	\begin{equation*}
		\sqrt[q-1]{\frac{4p}{c_q}}\times\sqrt[(2+s)(q-1)]{{c_{s}}}=a'.
	\end{equation*}
	Then, the RHS of \eqref{eqthm2} in Theorem \ref{thm2} takes the form
	\begin{align*}
		a'\sqrt[q-1]{\frac{\Omega}{\lambda}}\times\sqrt[(2+s)(q-1)]{\frac{1}{m}}+c_{\beta}{\lambda}^{\beta}.
	\end{align*}
	
	\noindent Substitute for $\lambda$ as given in \eqref{lambda} to obtain the form
	\begin{align*}
		a'\sqrt[q-1]{\Omega{m^{\frac{1}{(2+s)(1+\beta(q-1))}}}}\times\sqrt[(2+s)(q-1)]{\frac{1}{m}}+c_{\beta}{{m^{-\frac{\beta}{(2+s)(1+\beta(q-1))}}}}.
	\end{align*}
	
	\noindent  On simplifying the terms in the exponent of $m$, we get
	\begin{align*}
		m^{-\frac{\beta}{(2+s)[1+\beta(q-1)]}}\left(a'\Omega^{\frac{1}{q-1}}+c_{\beta}\right).
	\end{align*}
	
	\noindent  Therefore, from \eqref{eqthm2} of Theorem \ref{thm2}, we get the inequality
	\begin{equation*}
		\underset{z\in Z^m}{\text{Prob}}\left\{\left\|u_z^{\lambda}-u_{\rho}\right\|_{B_1} \leq m^{-\frac{\beta}{(2+s)[1+\beta(q-1)]}}\left(a'\Omega^{\frac{1}{q-1}}+c_{\beta}\right)\right\}\geq
		1-\delta.
	\end{equation*}
	Observe that from \eqref{omega}, if $m\to\infty$, then $\lambda\to0$, so $\Omega^{\frac{1}{q-1}}$, and hence $a'\Omega^{\frac{1}{q-1}}+c_{\beta}$ tends to a constant, say $b'$,
	and accordingly, the convergence will be of the order
	\begin{equation*}
		\underset{z\in Z^m}{\text{Prob}}\left\{\left\|u_z^{\lambda}-u_{\rho}\right\|_{B_1} \leq b'm^{-\frac{\beta}{(2+s)[1+\beta(q-1)]}}\right\}\geq
		1-\delta.
	\end{equation*}
	
	\noindent    That is, for $\delta\in(0,1)$  with confidence $1-\delta$, as $m\to\infty$, we have the asymptotic rate of convergence of the order
	\begin{equation*}
		\left\|u_z^{\lambda}-u_{\rho}\right\|_{B_1} =
		O(m^{-\frac{\beta}{(2+s)[1+\beta(q-1)]}})= O(\lambda^{\beta}).
	\end{equation*}
	Hence the proof.
\end{proof}

\section{Numerical illustration}
\label{sec4}
In this section, we consider an example that illustrates the
theoretical results explained in the previous section. Let us
consider the space $l^3_n$, the set of all real sequences
$a=(a_n)_{n\in\mathbb{N}}$ with
$\|a\|_{l^3_n}^3=\sum_{n=1}^{\infty}{n^4|a_n|^3}<\infty$. Under the map
$D_n:l^3_n\to l^3$, defined by
$D_n(a_1,a_2,\cdots)=(a_1,2^{4/3}a_2,3^{4/3}a_3\cdots)$, the
spaces $l^3_n $ and $l^3$ are isometrically isomorphic and hence
$l^3_n$ is a $3$- uniform convex space. Let for all
$i\in\mathbb{N}$, $\phi_i\in C[-1,1]$ denote the
${(i-1)}^{\text{th}}$ degree Legendre polynomial.
Assume each $\phi_i$ is standardized, that is, $|\phi_i(x)|\leq1$ on $[-1,1]$. Let us take  $p=2.$\\


\noindent Define a map $A$ from $l^3_n$ to $L^2[-1,1]$ given by
\begin{equation}
	\label{eqnnum1}
	A(a)=\sum_{i=1}^{\infty}a_i\phi_i
\end{equation}
where $a=(a_1,a_2,\dots)$. Then by the Holder's inequality, the RHS of \eqref{eqnnum1} is well defined and continuous.
Also, the given operator is an injective  bounded linear operator. To show that it is ill-posed,
take $a=(0,0,\dots,\frac{1}{n},0,\dots)$, where $\frac{1}{n}$ is placed in the $n^{\text{th}}$ coordinate. Then, the image of $a$ is
\[A(a)=\frac{1}{n}\phi_{n}.\]
Since $A(0)=0$, we have the following consequence
\[\|A(0)-A(a)\|_{L^2}=\left\|\frac{1}{n}\phi_n\right\|_{L^2}\leq\left|\frac{1}{n}\right|\to0\quad\text{as}\quad n\to\infty.\]
But we have $\|a-0\|_{l^3_n}=n^{\frac{1}{3}}\to\infty$ as $n\to\infty$.
\vspace{0.3cm}

\noindent Now if $f\in A(l^3_n)$, then we have $f=\sum_{i=1}^{\infty}u_i\phi_i$
for some $u=(u_1,u_2,\dots)\in l^3_n$. Define $B_K$ as in
definition \ref{def1}, then the norm of $f$ becomes
\[\|f\|_{B_K}=\|u\|_{l^3_n}=\left(\sum_{i=1}^{\infty}i^4|u_i|^3\right)^{\frac{1}{3}}.\]

\noindent The continuous dual $B_K^*$ is isometric to $l^{{3}/{2}}_n$. Then, the space $B_K^*$ can be identified with $l^{{3}/{2}}_n$ as
\begin{equation*}
	B_K^*=\Big\{g^*=\sum_{i=1}^{\infty}b_i\phi_i~|~b=(b_1,b_2,\dots)\in l^{{3}/{2}}_n\Big\}~~\text{such that}~~\|g^*\|_{B_K^*}=\|b\|_{l^{{3}/{2}}_n}
\end{equation*}
with the associated duality product for $f=\sum_{i=1}^{\infty}u_i\phi_i$ and $g^*=\sum_{i=1}^{\infty}b_i\phi_i$ as
\begin{equation*}
	\langle f,g^*\rangle_{B_K\times B_K^*}=\langle u,b\rangle_{l^3_n\times
		l^{3/2}_n}=\sum_{i=1}^{\infty}{i^4u_ib_i}.
\end{equation*}
The above product is equivalent to the condition that
\begin{equation*}
	\Big\langle \sum_{i=1}^{\infty} u_i\phi_i,\sum_{j=1}^{\infty}b_j\phi_j \Big\rangle_{B_K\times B_K^*}=\sum_{i,j=1}^{\infty}u_ib_j{\langle \phi_i,\phi_j\rangle}_{B_K\times
		B_K^*}
\end{equation*}
with product between $\phi_i$ and $\phi_j$ defined as
\begin{equation*}
	{\langle \phi_i,\phi_j\rangle}_{B_K\times
		B_K^*}=\frac{2i+1}{2}i^4\int_{-1}^1\phi_i(x)\phi_j(x)dx = \left\{\begin{array}{ll}
		0 &\quad\hbox{if}\quad i\neq j\\
		i^4 &\quad\hbox{otherwise}.
	\end{array}
	\right.
\end{equation*}
\vspace{0.2cm}

\noindent Then, $B_K$ will be an RKBS with the kernel function
\begin{equation}
	K(x,y)=\sum_{i=1}^{\infty}\frac{1}{i^4}\phi_i(x)\phi_i(y).
\end{equation}
To show that consider
\begin{equation*}
	H(\cdot,y)=A^*K(\cdot,y)=\sum_{i=1}^{\infty}\frac{1}{i^4}\phi_i(y)\phi_i.
\end{equation*}
Then, it is evident that for $f=\sum_{i=1}^{\infty}u_i\phi_i=Au$,
\begin{align*}
	\langle u,H(\cdot,y)\rangle_{{l^3_n\times l^{3/2}_n}}=\sum_{i=1}^{\infty}u_i\phi_i(y)=f(y)
\end{align*}
as required. Now, since $l^{3/2}_n$ is isometric to $B_K^*$ and
$$|(Au)(y)|\leq\sum_{i=1}^{\infty}|u_i|=\sum_{i=1}^{\infty}i^{4/3}i^{-4/3}|u_i|\leq
\left(\sum_{i=1}^{\infty}i^{-2}\right)^{1/2}\|u\|_{l^3_n}$$ both the
assumptions $A(1)$ and $A(2)$ are satisfied.\\

\noindent We simulate the data points as follows.   Let
$f_{\rho}(x)=1 + x -x^2+ 0.5x^7$ and consider the data points
$z=\{(x_i,y_i)\}_{i=1}^{m}$.
Our distribution $\rho(x,y)$ is given such that the marginal distribution $\rho_X(x)$ is the uniform distribution on $[-1,1]$ and the conditional distribution $\rho(y|x)$ is
\begin{equation*}
	\rho(y|x)=f_{\rho}(x)+ N(0,\sigma^2),
\end{equation*}
where $N(0,\sigma^2)$ is the Guassian distribution with mean $0$ and variance $\sigma^2$. Then,
it follows from \cite{CuckerZhou2007} that for $p=2$,
\begin{equation*}
	u_{\rho}=\underset{u\in B_1}{\arg \min}\int_Z (y-(Au)(x))^2 d\rho(x,y)
\end{equation*}
and for all $u\in l^3_n$
\begin{equation*}
	\|Au_{\rho}^{\lambda}-Au_{\rho}\|_{L^2}^2+\frac{\lambda}{3}\|u_{\rho}^{\lambda}\|^3_{l^3_n}\leq
	\|Au-Au_{\rho}\|_{L^2}^2+\frac{\lambda}{3}\|u\|_{l^3_n}^3.
\end{equation*}
In particular for $u=u_{\rho}$,
\begin{equation*}\label{ineq1}
	\|Au_{\rho}^{\lambda}-Au_{\rho}\|_{L^2}^2+\frac{\lambda}{3}\|u_{\rho}^{\lambda}\|_{l^3_n}^3\leq\frac{\lambda}{3}\|u_{\rho}\|_{l^3_n}^3
\end{equation*}
\begin{equation*}
	\frac{1}{3}\|u_{\rho}^{\lambda}\|_{l^3_n}^3-\frac{1}{3}\|u_{\rho}\|_{l^3_n}^3\leq-\frac{1}{\lambda}\|Au_{\rho}^{\lambda}-Au_{\rho}\|_{L^2}^2.
\end{equation*}
Now we claim that the assumption $A(3)$ holds whenever there exists
$\omega\in L^2[-1,1]$ such that $A^*\omega\in J_q(u_{\rho})$. This
condition is common in inverse problem literature and is satisfied for our $f_{\rho},$ as it involves only a
finite number of basis elements. From \eqref{eq6}, we obtain
\begin{equation*}
	c_q\|u_{\rho}^{\lambda}-u_{\rho}\|^3_{l^3_n}\leq-\frac{1}{\lambda}\|Au_{\rho}^{\lambda}-Au_{\rho}\|_{L^2}^2-\langle u_{\rho}^{\lambda}-u_{\rho},j_q(u_{\rho})\rangle_{l^3_n\times l^{3/2}_n}
\end{equation*}
\begin{equation*}
	c_q\|u_{\rho}^{\lambda}-u_{\rho}\|^3_{l^3_n}\leq-\frac{1}{\lambda}\|Au_{\rho}^{\lambda}-Au_{\rho}\|_{L^2}^2-\langle A(u_{\rho}^{\lambda}-u_{\rho}),\omega\rangle_{L^2\times
		L^2}.
\end{equation*}
As $-\langle
A(u_{\rho}^{\lambda}-u_{\rho}),\omega\rangle_{L^2\times
	L^2}=\langle
A(u_{\rho}-u_{\rho}^{\lambda}),\omega\rangle_{L^2\times L^2}\leq
\|A(u_{\rho}-u_{\rho}^{\lambda})\|_{L^2}\|\omega\|_{L^2},$ we get
\begin{equation}
	\label{eqnnum2}
	c_q\|u_{\rho}^{\lambda}-u_{\rho}\|^3_{l^3_n}\leq-\frac{1}{\lambda}\|Au_{\rho}^{\lambda}-Au_{\rho}\|_{L^2}^2+\|A(u_{\rho}-u_{\rho}^{\lambda})\|_{L^2}\|\omega\|_{L^2}.
\end{equation}
Since the RHS of \eqref{eqnnum2} is a quadratic expression, whose leading term
is negative, we get the following bound
\begin{equation*}
	c_q\|u_{\rho}^{\lambda}-u_{\rho}\|^3_{l^3_n}\leq
	\frac{\lambda\|\omega\|^2_{L^2}}{4}.
\end{equation*}
i.e.,
\begin{equation}
	\|u_{{\rho}}^{\lambda}-u_{{\rho}}\|_{l^3_n}\leq
	\frac{\lambda^{1/3}}{\sqrt[3]{4c_q}}\|\omega\|_{L^2}^{\frac{2}{3}}=c_{\beta}\lambda^{\beta}.
\end{equation}
Hence, assumption $A(3)$ is satisfied with $\beta=\frac{1}{3}$.
Note that the kernel is a continuous function on a compact domain; therefore it is uniformly continuous as required by our theorem.

\noindent    Finally, to show the logarithmic complexity, let
\begin{equation*}
	B_K^1=\{f\in B_K:~\|f\|_{B_K}\leq1\}
\end{equation*}
This means that for all $f=\sum_{i=1}^{\infty}u_i\phi_i$, we have $|u_i|\leq\frac{1}{i^{4/3}}$.
Let $f_n$ denote $f$ truncated to the first $n$ coordinates. Then, for all $f\in B_K^1$ $$\|f-f_n\|_{\infty}=\|\sum_{i> n}u_i\phi_i\|_{\infty}\leq\sum_{i> n}\frac{1}{i^{4/3}}.$$
Choose $n$ such that the last sum is less than or equal to  $\frac{\epsilon}{2}$. This can be done by selecting $n$ which will satisfy the following inequality
\begin{equation*}
	\sum_{i> n}^{\infty}\frac{1}{i^{4/3}}\leq\int_{n-1}^{\infty}\frac{1}{x^{4/3}}~dx\leq\frac{\epsilon}{2}
\end{equation*}
which is true when $n\geq 1+\frac{216}{\epsilon^3}$. Let
\begin{equation}\label{neq1}
	n=\left\lfloor 1+\frac{216}{\epsilon^3}\right\rfloor+1\leq
	1+\frac{216}{\epsilon^3}+1.
\end{equation}
Then, we will consider $B_K^{1,n}$, the set of elements in $B_K^1$ truncated to the first $n$ coordinates. Thus, $B_K^{1,n}$ is a bounded subset of a finite dimensional space.
Therefore, it will give us
\begin{equation*}
	\mathcal{N}(B_K^{1,n},\epsilon)\leq
	C\left(\frac{1}{\epsilon}\right)^n.
\end{equation*}
Let $U{(f_i,\frac{\epsilon}{2})}, i=1,\dots,l$ be the finite set of open balls of radius $\frac{\epsilon}{2}$ that covers $B_K^{1,n}$.
If we extend these balls to radius $\epsilon$, then for any $f\in B_K^1$, there exists some $f_i$
such that $\|f-f_i\|_{\infty}\leq \|f-f_n\|_{\infty}+\|f_n-f_i\|_{\infty}< \epsilon$.
That is, $f\in U{(f_i,{\epsilon})}$. Hence, $B_K^1$ has covering number of the form
$$\mathcal{N}(B_K^1,\epsilon)\leq\mathcal{N}(B_K^{1,n},\frac{\epsilon}{2})\leq C\left(\frac{2}{\epsilon}\right)^n.$$
This gives us the following bound
\begin{align*}
	\ln\mathcal{N}(B_K^1,\epsilon)\leq Cn\ln\left(\frac{2}{\epsilon}\right)\leq
	Cn\left(\frac{2}{\epsilon}\right).
\end{align*}
Since our $n$ has the form given by \eqref{neq1}, we get
\begin{align*}
	\frac{n}{\epsilon}\leq\frac{2}{\epsilon}+\frac{216}{\epsilon^4}
	\leq\frac{2}{\epsilon^4}+\frac{216}{\epsilon^4}.
\end{align*}
Finally, we can state for some $C_2>0$ that
\begin{align*}
	\ln\mathcal{N}(B_K^1,\epsilon)\leq
	C_2\left(\frac{1}{\epsilon^4}\right).
\end{align*}
Since our set $\mathcal{G}_{\rho}^{\lambda}$ from \eqref{G} is a continuous image of $B_K^1$, our assumption is satisfied.\\

\noindent We take $\sigma=0.1, 0.05$ and $0.01$ for analysis purposes and try to obtain the error $\|u_z^{\lambda}-u_{\rho}\|_{l^3_n}$ for our given problem.
The true solution is given by $
u_{\rho}=\left({2}/{3},{291}/{256},-{2}/{3},{35}/{96},0,{7}/{32},0,{1}/{32},0,\dots\right)$.
The unregularized and regularized solutions are plotted against
the true solution in Figure \ref{fig1}  for
$m=50,100,200$ when $\sigma=0.1$. The respective computed solutions when $\sigma=0.05$ and $\sigma=0.01$ are given
in Figure \ref{fig2} and Figure \ref{fig3}.
The true solution $u_{\rho}$ is given by the dashed curve.
\\
\begin{figure}[H]
	\subfloat[$\sigma=0.1, m=50,100,200$ ]{\includegraphics[width
		= 2.75in, height=1.7in]{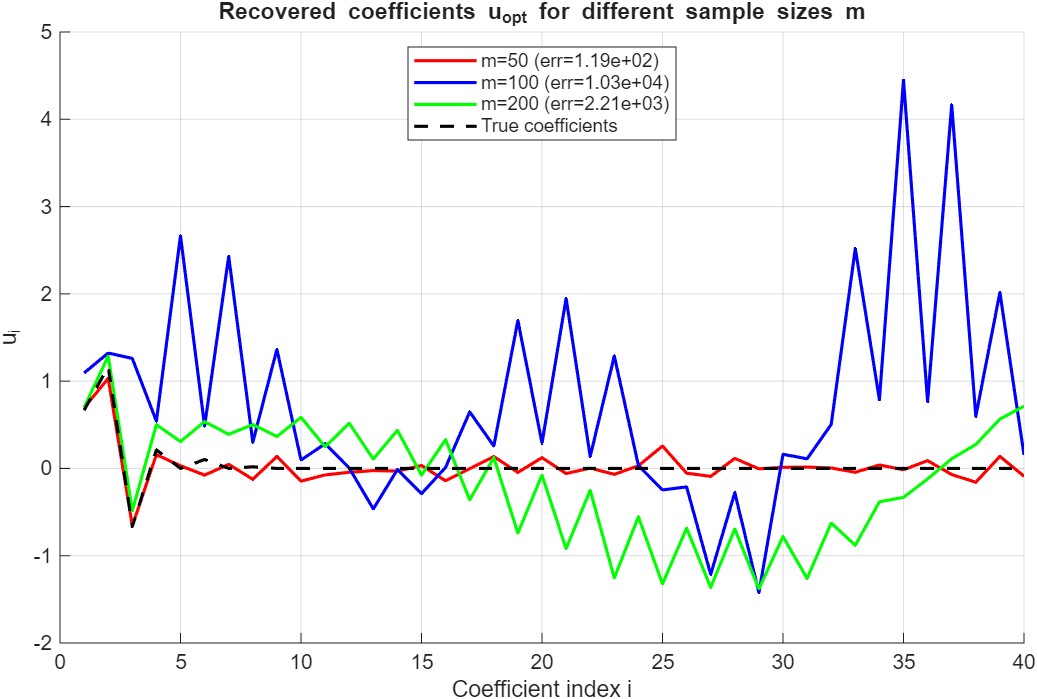}}
	\subfloat[$\sigma=0.1,m= 50,100,200$]{\includegraphics[width =
		2.75in, height=1.7in]{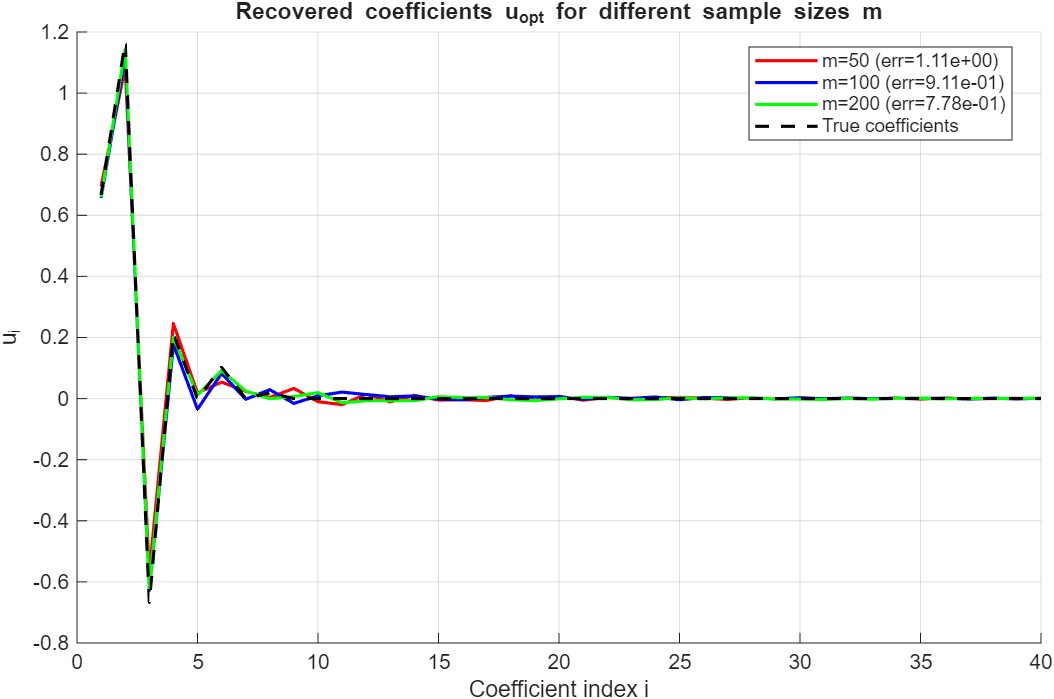}}
	\caption{Computed regularized solutions for $\sigma=0.1$ and different $m$ } \label{fig1}
\end{figure}

%
\noindent The error estimates,
$\|u_z^{\lambda}-u_{\rho}\|_{l^3_n},$ for different values of $m$
and different values of $\sigma$ in  regularized and unregularized
cases are summarized in Table \ref{table1}. 
Numerical results assert that the
regularized solution works better than the unregularized solution.

\begin{figure}[H]
	\subfloat[$\sigma=0.05, m=50,100,200$
	]{\includegraphics[width = 2.75in,
		height=1.7in]{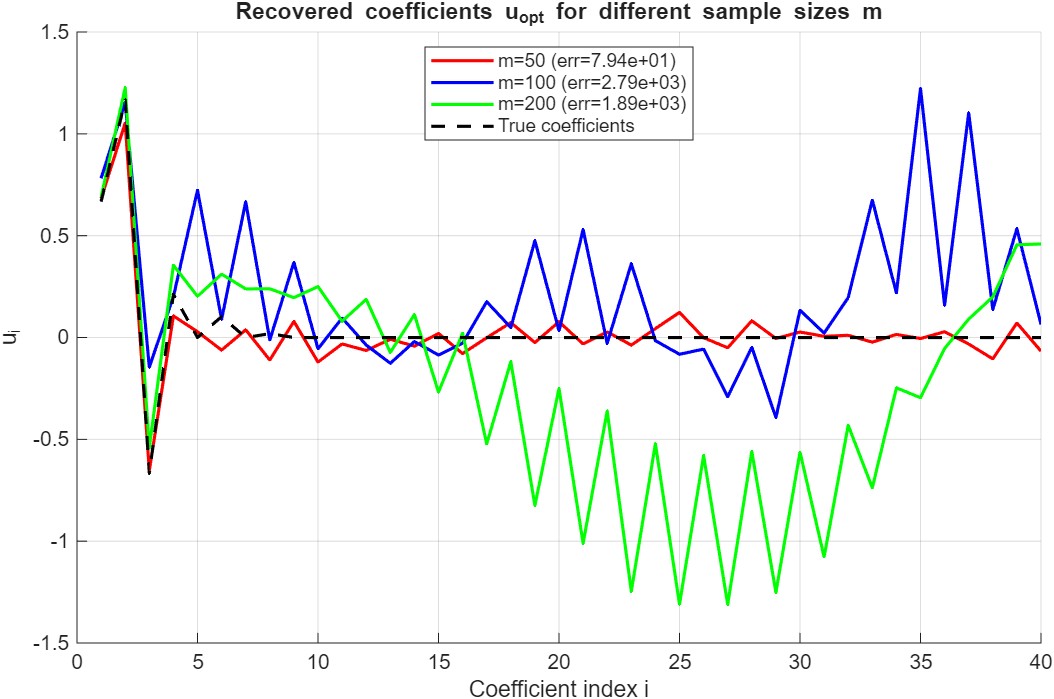}} \subfloat[$\sigma=0.05,m=
	50,100,200$]{\includegraphics[width = 2.75in,
		height=1.7in]{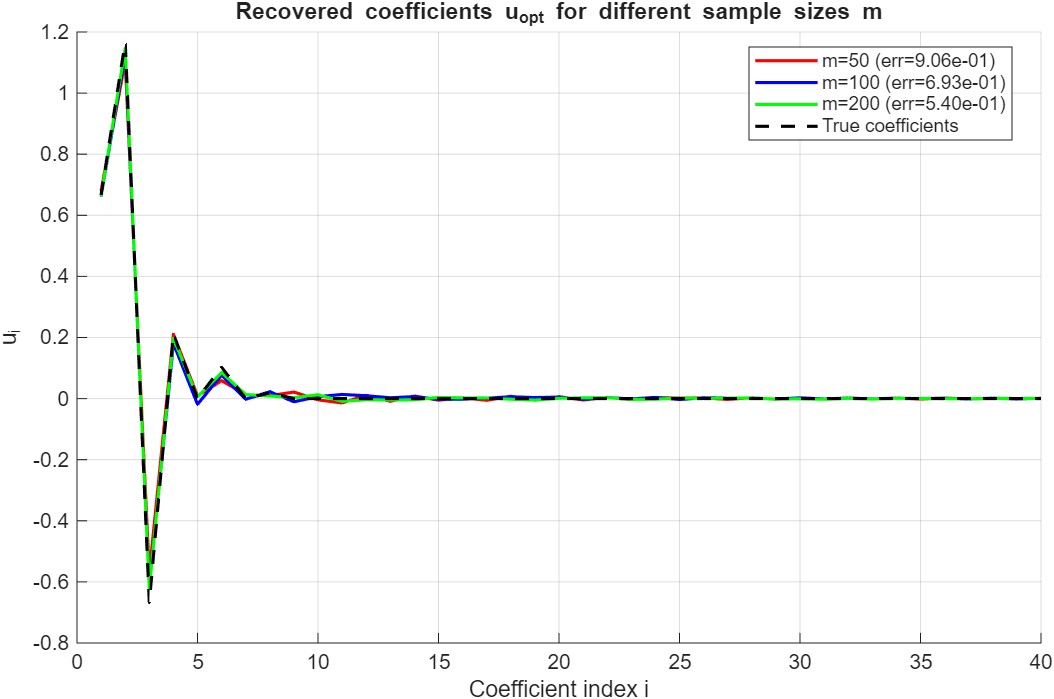}}
	\caption{Computed regularized solutions for $\sigma=0.05$ and different $m$ } \label{fig2}
\end{figure}

\begin{figure}[H]
	\subfloat[$\sigma=0.01, m=50,100,200$
	]{\includegraphics[width = 2.75in,
		height=1.7in]{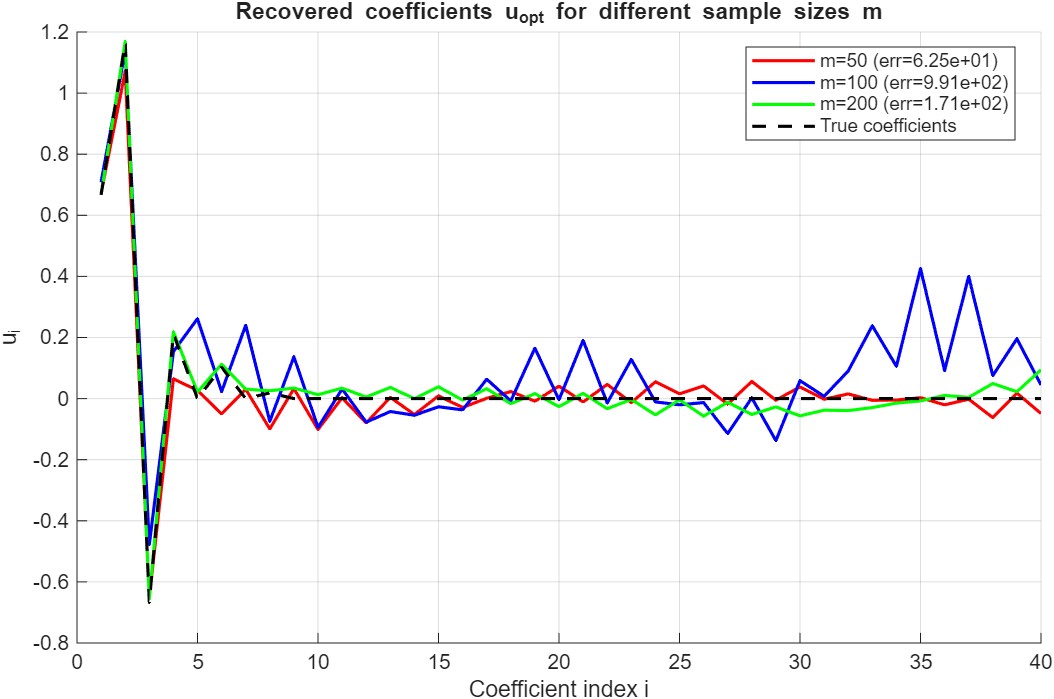}} \subfloat[$\sigma=0.01,m=
	50,100,200$]{\includegraphics[width = 2.75in,
		height=1.7in]{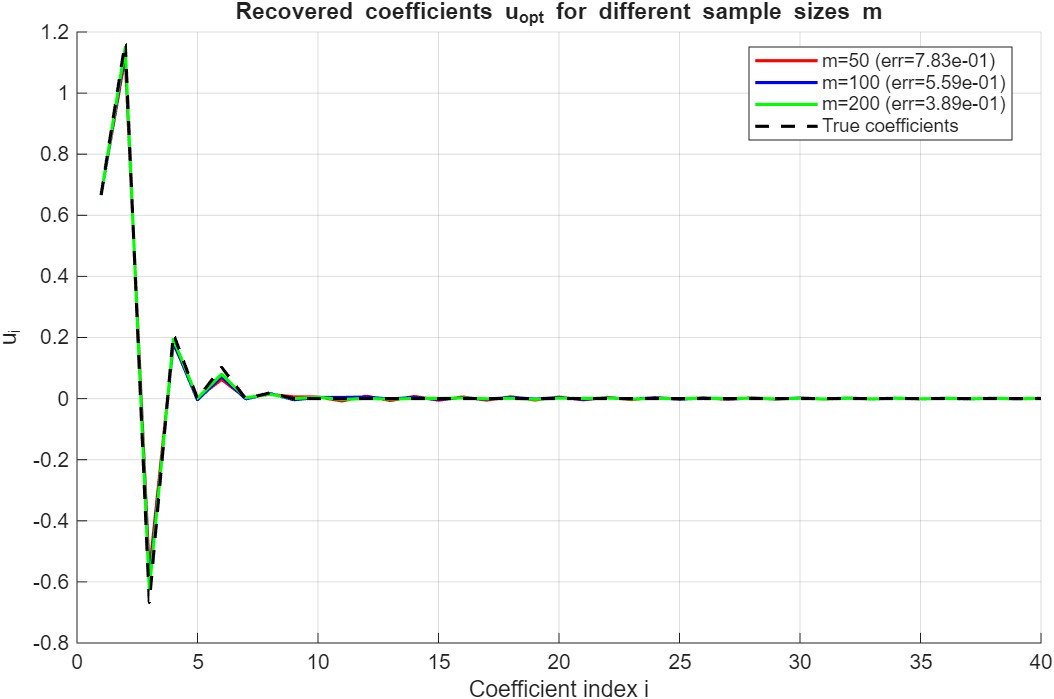}}
	\caption{Computed regularized solutions for $\sigma=0.01$ and different $m$ } \label{fig3}
\end{figure}
%

\begin{table}[H]
	\caption{Computational results for different error levels }
	\begin{center} 
		\begin{tabular}{|c|c|c|c|c|}
			\hline $\sigma$ & $m$  & Regul.Parameter, $\lambda$
			&   \multicolumn{1}{|c|}{Unregul. Error  }
			& \multicolumn{1}{|c|}{Regul. Error}\\
			\hline
			& 50 & $4.6488 \times10^{-4}$  & $1.34\times10^2$  & $1.11\times10^0$ \\
			\cline{2-5}
			$10\%$  & 100 & $2.934\times10^{-4}$  & $2.74\times10^3$ & $9.11\times10^{-1}$ \\
			\cline{2-5}
			& 200& $1.851 \times10^{-4}$  & $1.29\times10^3$ & $7.78\times10^{-1}$\\
			\hline
			& 50 & $4.6488 \times10^{-4}$  & $7.94\times10^1$  & $9.06\times10^{-1}$ \\
			\cline{2-5}
			$5\%$  & 100 & $2.934\times10^{-4}$  & $2.79\times10^3$ & $6.93\times10^{-1}$ \\
			\cline{2-5}
			& 200& $1.851 \times10^{-4}$  & $1.89\times10^3$ & $5.40\times10^{-1}$\\
			\hline
			& 50 & $4.6488 \times10^{-4}$  & $6.25\times10^1$  & $7.83\times10^{-1}$ \\
			\cline{2-5}
			$1\%$  & 100 & $2.934\times10^{-4}$  & $9.91\times10^2$ & $5.59\times10^{-1}$ \\
			\cline{2-5}
			& 200& $1.851 \times10^{-4}$  & $1.71\times10^3$ & $3.89\times10^{-1}$\\
			\hline
		\end{tabular}\label{table1}
	\end{center}
\end{table}

\section{Conclusion}

In this paper, we studied statistical inverse problems in the
Banach space setting and established both convergence and
convergence rate analyses. Furthermore, numerical simulations were
performed to demonstrate the implementability and effectiveness of
the proposed method. It is important to note that, in the
numerical example, the covering number used does not represent a
sharp upper bound; therefore, Theorem 3 was not applied to this
case. 

We make the remark that the rate obtained in this paper is not sharp, and hence does not reflect an optimal rate of convergence. At present, work related to optimality is still being actively investigated by us, and we expect to report sharper, potentially optimal rates in a subsequent study.\\

For a $2$-uniformly convex space such as a Hilbert space
(i.e., when $q=2$), the convergence rate admits an upper bound of
order $m^{-\frac{\beta}{(2+s)[1+\beta]}}$. In \cite{BGM}, the optimal convergence rate in a Reproducing Kernel Hilbert space setting for 
\begin{equation}
	\|L_v^l(u_z-u_{\rho})\|
\end{equation}
is of the order
\begin{equation}
	O\left(\left(\frac{1}{m}\right)^{\frac{b(r+l)}{2br+b+1}}\right)
\end{equation}
Here $l\in[0,1/2]$ and $u_z$ is the regularized solution. $b>1$ is a parameter corresponding to the probability measure and eigen values of the self adjoint operator $L_v$. The parameter $r$ appears in the source condition $f_{\rho}=L_v^r(h)$, for some $h$ in the domain.

For $l=0$ the result is comparable to that of ours and provides a rate of
\begin{equation}
	O\left(\left(\frac{1}{m}\right)^{\frac{br}{2br+b+1}}\right)
\end{equation}
As it can be seen, our result can be made optimal for the special cases when the powers coincide. In particular for the limiting case $s=0$ and for $\beta=2/3$, they coincide, when $b=2$ and $r=1/2$.

The proposed framework can be further generalized by
considering an arbitrary continuous convex function as the
regularization functional. In that case, the error and convergence
rate would be measured using the Bregman distance, allowing the
extension of the analysis to non-reflexive Banach spaces as
domains. In such a study, one could corporate the generalized theory of RKBS as given in \cite{Xu2023SparseBanach}.\\

\noindent\textbf{Acknowledgment}$~$ We sincerely thank the anonymous referee(s) for their careful reading of the manuscript and for their valuable suggestions, which have significantly improved the presentation of the paper.

		\bibliographystyle{plain}
		\footnotesize{\bibliography{reffinal}}

\begin{thebibliography}{10}

\bibitem{BAUER200752}
Frank Bauer, Sergei Pereverzev, and Lorenzo Rosasco.
\newblock On regularization algorithms in learning theory.
\newblock {\em Journal of Complexity}, 23(1):52--72, 2007.

\bibitem{BGM}
Gilles Blanchard and Nicole Mücke.
\newblock Optimal rates for regularization of statistical inverse learning
  problems.
\newblock {\em Foundations of Computational Mathematics}, 18, 04 2016.

\bibitem{Bregman1967}
L.~M. Bregman.
\newblock The relaxation method of finding the common point of convex sets and
  its application to the solution of problems in convex programming.
\newblock {\em USSR Computational Mathematics and Mathematical Physics},
  7(3):200--217, 1967.

\bibitem{Clarke1998}
Francis~H. Clarke, Yuri~S. Ledyaev, Ronald~J. Stern, and Peter~R. Wolenski.
\newblock {\em Nonsmooth Analysis and Control Theory}, volume 178 of {\em
  Graduate Texts in Mathematics}.
\newblock Springer-Verlag, 1998.

\bibitem{CuckerSmale2002}
Felipe Cucker and Steve Smale.
\newblock Best choices for regularization parameters in learning theory: on the
  bias-variance problem.
\newblock {\em Foundations of Computational Mathematics}, 2(4):413--428, 2002.

\bibitem{CuckerZhou2007}
Felipe Cucker and Ding‐Xuan Zhou.
\newblock {\em Learning Theory: An Approximation Theory Viewpoint}.
\newblock Cambridge Monographs on Applied and Computational Mathematics, 24.
  Cambridge University Press, Cambridge, UK, 2007.

\bibitem{dudley1967sizes}
Richard~M. Dudley.
\newblock The sizes of compact subsets of hilbert space and continuity of
  gaussian processes.
\newblock {\em Journal of Functional Analysis}, 1(3):290--330, 1967.

\bibitem{MR463994}
Ivar Ekeland and Roger Temam.
\newblock {\em Convex analysis and variational problems}, volume Vol. 1 of {\em
  Studies in Mathematics and its Applications}.
\newblock North-Holland Publishing Co., Amsterdam-Oxford; American Elsevier
  Publishing Co., Inc., New York, 1976.
\newblock Translated from the French.

\bibitem{engl2000regularization}
H.W. Engl, M.~Hanke, and A.~Neubauer.
\newblock {\em Regularization of Inverse Problems}.
\newblock Mathematics and Its Applications. Springer Netherlands, 2000.

\bibitem{Xu1991Inequalities}
K.~Xu H.\.
\newblock Inequalities in banach spaces with applications.
\newblock {\em Nonlinear Analysis}, 16(12):1127--1138, 1991.

\bibitem{Hanner1956}
Olof Hanner.
\newblock On the uniform convexity of $l^p$ and $\ell^p$.
\newblock {\em Arkiv för Matematik}, 1(5):237--244, 1956.

\bibitem{Hofmann2007}
Bernd Hofmann, Barbara Kaltenbacher, Christiane P{\"o}schl, and Otmar Scherzer.
\newblock A convergence rates result for tikhonov regularization in banach
  spaces with non-smooth operators.
\newblock {\em Inverse Problems}, 23(3):987--1010, 2007.

\bibitem{Liu2017}
Huanxiang Liu, Baohuai Sheng, and Peixin Ye.
\newblock The improved learning rate for regularized regression with rkbss.
\newblock {\em International Journal of Machine Learning and Cybernetics},
  8(4):1235--1245, 2017.

\bibitem{nair2009linear}
M.T. Nair.
\newblock {\em Linear Operator Equations: Approximation and Regularization}.
\newblock World Scientific, 2009.

\bibitem{rajan1}
M.~P. Rajan.
\newblock Convergence analysis of a regularized approximation for solving
  fredholm integral equations of the first kind.
\newblock {\em Journal of mathematical analysis and applications},
  279(2):522--530, 2003.

\bibitem{rajan2}
M.~P. Rajan.
\newblock A modified convergence analysis for solving fredholm integral
  equations of the first kind.
\newblock {\em Integral Equations and Operator Theory}, 49(4):511--516, 2004.

\bibitem{rajan3}
M.~P. Rajan.
\newblock An efficient discretization scheme for solving ill-posed problems.
\newblock {\em Journal of mathematical analysis and applications},
  313(2):654--677, 2006.

\bibitem{rajan4}
M.~P. Rajan.
\newblock A posteriori parameter choice with an efficient discretization scheme
  for solving ill-posed problems.
\newblock {\em Applied mathematics and computation}, 204(2):891--904, 2008.

\bibitem{rajan5}
M.~P. Rajan.
\newblock A parameter choice strategy for the regularized approximation of
  fredholm integral equations of the first kind.
\newblock {\em International Journal of Computer Mathematics},
  87(11):2612--2622, 2010.

\bibitem{rajan6}
M.~P. Rajan.
\newblock An efficient ridge regression algorithm with parameter estimation for
  data analysis in machine learning.
\newblock {\em SN Computer Science}, 3(2):171, 2022.

\bibitem{rajan7}
M.~P. Rajan.
\newblock Convergence analysis of a class of regularization methods with a
  novel discrete scheme for solving inverse problems.
\newblock {\em Journal of Computational Mathematics}, pages 1--16, 2025(
  doi:https://doi.org/10.4208/jcm.2503-m2024-0225).

\bibitem{rockafellar1970convex}
R.~Tyrrell Rockafellar.
\newblock {\em Convex Analysis}.
\newblock Princeton University Press, Princeton, NJ, 1970.

\bibitem{schuster2012regularization}
T.~Schuster, B.~Kaltenbacher, B.~Hofmann, and K.S. Kazimierski.
\newblock {\em Regularization Methods in Banach Spaces}.
\newblock Radon Series on Computational and Applied Mathematics. De Gruyter,
  2012.

\bibitem{Learningrate}
Baohuai Sheng and Peixin Ye.
\newblock The learning rates of regularized regression based on reproducing
  kernel banach spaces.
\newblock {\em Abstract and Applied Analysis}, 2013(1):694181, 2013.

\bibitem{Showalter1997}
Ralph~E. Showalter.
\newblock {\em Monotone Operators in Banach Space and Nonlinear Partial
  Differential Equations}, volume~49 of {\em Mathematical Surveys and
  Monographs}.
\newblock American Mathematical Society, Providence, RI, 1997.

\bibitem{Sprung2019}
Benjamin Sprung.
\newblock Upper and lower bounds for the bregman divergence.
\newblock {\em Journal of Inequalities and Applications}, 2019(1):4, 2019.

\bibitem{Tikhonov:1963}
A.~N. Tikhonov.
\newblock Solution of incorrectly formulated problems and the regularization
  method.
\newblock {\em Soviet Math. Dokl.}, 4(4):1035--1038, 1963.

\bibitem{JMLR:v22:20-751}
Rui Wang and Yuesheng Xu.
\newblock Representer theorems in banach spaces: Minimum norm interpolation,
  regularized learning and semi-discrete inverse problems.
\newblock {\em Journal of Machine Learning Research}, 22(225):1--65, 2021.

\bibitem{MR4749129}
Rui Wang, Yuesheng Xu, and Mingsong Yan.
\newblock Sparse representer theorems for learning in reproducing kernel
  {B}anach spaces.
\newblock {\em J. Mach. Learn. Res.}, 25:Paper No. [93], 45, 2024.

\bibitem{Xu2023SparseBanach}
Yuesheng Xu.
\newblock Sparse machine learning in banach spaces.
\newblock {\em Applied Numerical Mathematics}, 187:185--204, 2023.

\bibitem{XuRoach1991}
Z.~B. Xu and G.~F. Roach.
\newblock Characteristic inequalities of uniformly convex and uniformly smooth
  banach spaces.
\newblock {\em Journal of Mathematical Analysis and Applications},
  157(1):189--210, 1991.

\bibitem{zeidler1985nonlinear}
Eberhard Zeidler.
\newblock {\em Nonlinear Functional Analysis and its Applications, Volume III:
  Variational Methods and Optimization}.
\newblock Springer, New York, 1985.

\bibitem{5179093}
Haizhang Zhang, Yuesheng Xu, and Jun Zhang.
\newblock Reproducing kernel banach spaces for machine learning.
\newblock In {\em 2009 International Joint Conference on Neural Networks},
  pages 3520--3527, 2009.

\end{thebibliography}
		\nocite{*}

	\end{document}